\documentclass[11pt,reqno]{amsart}

\usepackage[utf8]{inputenc}
\usepackage[T1]{fontenc}
\usepackage{amsmath,amssymb,amsthm}
\usepackage{mathtools}
\usepackage[margin=1.1in]{geometry}
\usepackage{enumitem}
\usepackage{booktabs}
\usepackage{longtable}
\usepackage{array}
\usepackage{xcolor}
\usepackage[colorlinks=true,linkcolor=blue!50!black,citecolor=blue!50!black,urlcolor=blue!50!black]{hyperref}

\theoremstyle{plain}
\newtheorem{theorem}{Theorem}[section]
\newtheorem{lemma}[theorem]{Lemma}
\newtheorem{proposition}[theorem]{Proposition}
\newtheorem{corollary}[theorem]{Corollary}
\newtheorem{definition}[theorem]{Definition}
\newtheorem{remark}[theorem]{Remark}
\newtheorem{example}[theorem]{Example}

\newcommand{\Qp}{\mathbb{Q}_{>0}}
\newcommand{\Rp}{\mathbb{R}_{>0}}
\newcommand{\R}{\mathbb{R}}
\newcommand{\Z}{\mathbb{Z}}
\newcommand{\C}{\mathbb{C}}
\newcommand{\J}{J}

\newcommand{\Hom}{\operatorname{Hom}}
\newcommand{\Sol}{\operatorname{Sol}}
\newcommand{\supp}{\operatorname{supp}}
\newcommand{\arcosh}{\operatorname{arcosh}}

\begin{document}

\title[Reciprocal Cost on the Positive Rationals]{Reciprocal Cost on the Positive Rationals}

\author{Jonathan Washburn}
\address{Recognition Physics Institute, Austin, Texas, USA}
\email{jon@recognitionphysics.org}

\author{Sebastian Pardo-Guerra}
\address{Recognition Physics Institute, Austin, Texas, USA}
\email{
sebas@recognitionphysics.org}

\author{Milan Zlatanovi\'c}
\address{Department of Mathematics, Faculty of Science and Mathematics,
University of Ni\v{s}, Vi\v{s}egradska 33, 18000 Ni\v{s}, Serbia}
\email{zlatmilan@yahoo.com}

\date{}

\begin{abstract} We study nonnegative solutions of the reciprocal cost law on the
positive rationals and determine which of them admit regular extensions to the
positive reals. Using the substitution \(H=1+F\), we reduce the reciprocal composition law to
d'Alembert's functional equation. We prove that every nonnegative solution on
\(\Qp\) is determined by one real weight \(\alpha_p\) for each prime \(p\),
with only a global sign identification. Thus the rational solution space is
infinite dimensional. 
We prove that every nonnegative rational solution has an algebraic extension
to \(\Rp\),
but a regular extension exists exactly when the prime weights satisfy
\(\alpha_p=\lambda\log p\) for some \(\lambda\in\R\). In this case the
extension is unique and belongs to the one-parameter family
\(F_\lambda(x)=\cosh(\lambda\log x)-1\). Otherwise the
rational solution is unbounded on every nonempty open subset of \(\Qp\), and
the regular locus is closed and nowhere dense.   We also extend the result to arbitrary nontrivial subgroups of \(\Rp\), where
the alternative is governed by rational rank. Finally, we show that unit
logarithmic curvature selects the canonical reciprocal cost
\(\J(x)=(x+x^{-1})/2-1\), while, for a carrier \(G\) whose
logarithm is dense in \(\R\), the single asymptotic condition
\(F(e^s)\sim s^2/2\) implies both regularity and calibration.
 \\

\noindent\textbf{Keywords:} d'Alembert functional equation, reciprocal cost, character moduli,
positive rationals, regular extension,  rational rank, calibration.
\smallskip

\noindent\textbf{MSC (2020):}   39B52, 39B82, 39B05.\end{abstract}

\maketitle

\setcounter{tocdepth}{2}

\section{Introduction}

We study the following functional equation on positive ratios:
\begin{equation}\label{eq:rcl-intro}
F(xy)+F(x{y}^{-1})
=
2F(x)F(y)+2F(x)+2F(y).
\end{equation}  
On \(\Rp\), the composition law, continuity, and a unit quadratic calibration
together determine the canonical reciprocal cost (see e.g. \cite{CostUnique})
\[
 \J(x)=\frac{x+x^{-1}}2-1=\cosh(\log x)-1.
\]

There are two reasons to consider this equation on the positive rationals. First, for a continuous nonconstant \(F\) with \(F(1)=0\), every symmetric polynomial combiner of degree at most two has the form \(P(u,v)=2u+2v+cuv\), with \eqref{eq:rcl-intro} corresponding to \(c=2\) (see \cite{DAlembertInevitability}). Second, by unique factorization, \(\Qp\) has one independent coordinate
\(v_p(q)\) for each prime \(p\). Hence a rational solution has one free weight
\(\alpha_p\) for each prime, while a regular real extension exists exactly when these weights lie on the
one-dimensional logarithmic line
\[
\alpha_p=\lambda\log p,
\qquad p\in\mathcal P,
\]
for some \(\lambda\in\R\), where \(\mathcal P\) denotes the set of primes.
Thus the image of the restriction map
\[
\widetilde F\longmapsto \widetilde F|_{\Qp},
\]
defined on the regular nonnegative solutions on \(\Rp\), consists exactly of
the solutions \(F_\alpha\) with \(\alpha\) on this line. Every solution outside
this image is unbounded on every nonempty open subset of \(\Qp\).

{Before regularity and calibration, the nonnegative solution space on \(\Qp\)
is completely described by the prime weights. For
\(\alpha=(\alpha_p)_{p\in\mathcal P}\), set
\[
a_\alpha(q)=\sum_{p\in\mathcal P}v_p(q)\alpha_p .
\]
Then every nonnegative solution has the form
\[
F_\alpha(q)=\cosh(a_\alpha(q))-1,
\]
and \(F_\alpha=F_\beta\) if and only if \(\beta=\pm\alpha\). Hence, the solution space is
\(
\left(\prod_{p\in\mathcal P}\R\right)\big/\{\pm1\}.
\)}  {Every rational solution admits an algebraic extension to \(\Rp\) if no
regularity condition is required. A regular extension exists if and only if
\(
\alpha_p=\lambda\log p
\, (p\in\mathcal P)
\)
for some \(\lambda\in\R\). Here regular means continuous, locally bounded,
Haar-measurable, or monotone-profile. Otherwise, \(F_\alpha\) is unbounded
on every nonempty open subset of \(\Qp\). Hence the regular solutions form
the one-parameter family
\[
F_\lambda(x)=\cosh(\lambda\log x)-1.
\]
The unit quadratic calibration gives \(|\lambda|=1\). Since
\(F_\lambda=F_{-\lambda}\), both define the same reciprocal cost
\(\J\).}{
These results extend to arbitrary subgroups \(G\leq\Rp\). In this case,
regularity is determined by the rational rank of the logarithmic subgroup
\(\log G\). Moreover, the single asymptotic condition
\[
F(e^s)\sim \frac{s^2}{2}
\qquad (s\to0,\ s\in\log G)
\]
gives both regularity and the unit quadratic calibration.}

We state the three main results of the paper.  Their forms are given in
Theorems~\ref{thm:rational-moduli} and \ref{thm:extension},
Theorem~\ref{thm:rank}, and Theorem~\ref{thm:calibration-limit}.
\medskip

{\noindent\textbf{Theorem A.}
\emph{The nonnegative solutions on \(\Qp\) are
\(F_\alpha(q)=\cosh\bigl(\sum_p v_p(q)\alpha_p\bigr)-1\), and
\(F_\alpha=F_\beta\) exactly when \(\beta=\pm\alpha\).
Such a solution admits a regular extension if and only if there is a
\(\lambda\in\R\) such that \(\alpha_p=\lambda\log p\) for every prime \(p\).   
Equivalently, \(F_\alpha\) is continuous at some point of \(\Qp\), bounded
on some nonempty open subset of \(\Qp\), or has a nonnegative extension to
\(\Rp\) that is continuous, locally bounded, Haar-measurable, or
monotone-profile.
In this case the regular extension is unique and equals \(F_\lambda\).
Otherwise, \(F_\alpha\) is unbounded on every nonempty open subset of
\(\Qp\) and is continuous at no point.}}

\bigskip

\noindent\textbf{Theorem B.}
\emph{For a subgroup \(G\leq\Rp\), the same alternative is governed by the
rational rank \(r\) of \(\log G\): if \(r=1\) every nonnegative solution has a regular extension, and if \(r\geq2\) the regular locus is a nowhere-dense line.  Both
cases occur with \(G\) dense in \(\Rp\).}

\bigskip

\noindent\textbf{Theorem C.}
\emph{Let \(G\leq\Rp\) and suppose that \(\log G\) is dense in \(\R\).
If \(F:G\to[0,\infty)\) satisfies \eqref{eq:rcl-intro} and
\[
F(e^{s})\sim\frac{s^{2}}{2}
\qquad (s\to0,\ s\in\log G),
\]
then
\[
F(x)=\J(x)\qquad(x\in G).
\]
Thus the single asymptotic condition implies both regularity and unit quadratic
calibration.}
\medskip

%The description of the image of the restriction map is the main contribution
%of the paper. It separates three statements that are easy to conflate:
%\begin{enumerate}[label=\textup{(\roman*)}]
%\item algebraic comparison laws determine a character moduli space;
%\item regularity selects a one-dimensional subspace of this moduli space;
%\item unit calibration selects the reciprocal cost \(\J\).
%\end{enumerate}
%The earlier uniqueness theorem combines the last two steps. The present
%result identifies exactly which rational solutions lie in its domain of
%application; it does not change that theorem or any of its hypotheses.

We separate the classical ingredients from the new results. The regularity
part of Theorem A uses the classical fact that a locally bounded additive map
on a dense subgroup of \(\R\) is linear (see e.g. \cite{BinghamOstaszewski2018}).
The continuous classification on \(\Rp\) is also classical
\cite{Aczel1966,Kuczma2009}, and the family \(F_\lambda\), together with its
rescaling action, appears in \cite{CostUnique,AdmissibleCosts}.

The new results of this paper are the rational moduli, the exact image of the
regular restriction map, the blow-up outside this image, the role of rational
rank, and the single asymptotic calibration criterion of Theorem C.

Related Lean formalization is available in the Shape of Logic
repository\footnote{\url{https://github.com/jonwashburn/shape-of-logic}}.

\smallskip

\noindent{\bf Organization of the paper.}
Section~\ref{sec:algebra} reduces the reciprocal composition law to
d'Alembert's equation and proves the positivity and sign-rigidity lemmas.
Section~\ref{sec:rational} describes the rational moduli and the definite
locus. Section~\ref{sec:extension} proves the extension theorem, the blow-up
alternative, the closed nowhere-dense regular locus, and the rational-rank
criterion for arbitrary subgroups of \(\Rp\).
Section~\ref{sec:real} gives the real-side classification, the monotone
case, the automorphism action, and the calibration result.
Section~\ref{sec:sharpness} gives the counterexamples.
%%%%%%%%%%%%%%%%%%%%%%%%%%%%%%%%%%%%%%%%%%

\section{From reciprocal cost to d'Alembert}\label{sec:algebra}

Let \(G\) be an abelian group written multiplicatively, with identity element \(e\). We first introduce the reciprocal composition law.

\begin{definition}\label{def:rcl}
A function \(F:G\to\R\) satisfies the \emph{reciprocal composition law (RCL)} if
\begin{equation}\label{eq:rcl}
 F(xy)+F(xy^{-1})
 =2F(x)F(y)+2F(x)+2F(y)
 \qquad(x,y\in G).
\end{equation}
\end{definition}

{ \begin{lemma}\label{lem:identity-inverse}
Let \(F:G\to\R\) satisfy \eqref{eq:rcl}. Then \(F(e)\in\{0,-1\}\).
If \(F(e)=-1\), then \(F\equiv-1\). If \(F(e)=0\), then
\(
F(x^{-1})=F(x),\,x\in G.
\)
In particular, if \(F\geq0\), then \(F(e)=0\) and \(F\) is reciprocal.
\end{lemma}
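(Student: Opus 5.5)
Our approach is to substitute special values of \(x\) and \(y\) into \eqref{eq:rcl}.

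\emph{Step 1: the value at the identity.} Put \(x=y=e\). The left side is \(2F(e)\) and the right side is \(2F(e)^2+4F(e)\). Hence
\[
0=2F(e)^2+2F(e)=2F(e)\bigl(F(e)+1\bigr),
\]
so \(F(e)\in\{0,-1\}\).

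\emph{Step 2: the case \(F(e)=-1\).} Put \(y=e\) with \(x\) arbitrary. The left side is \(2F(x)\). The right side is
\[
2F(x)F(e)+2F(x)+2F(e)=-2F(x)+2F(x)-2=-2 .
\]
So \(F(x)=-1\) for every \(x\in G\).

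\emph{Step 3: symmetry when \(F(e)=0\).} Put \(x=e\) with \(y\) arbitrary. The left side is \(F(y)+F(y^{-1})\). The right side is
\[
2F(e)F(y)+2F(e)+2F(y)=2F(y).
\]
Hence \(F(y^{-1})=F(y)\) for all \(y\in G\).

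\emph{Step 4: the nonnegative case.} If \(F\geq0\), then \(F(e)\neq-1\), so Step 1 gives \(F(e)=0\). Step 3 then shows that \(F\) is reciprocal.

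There is no real obstacle. The only point that needs attention is choosing the substitutions: \(y=e\) isolates the constant case, and \(x=e\) yields reciprocity. All the computations are one-line evaluations.
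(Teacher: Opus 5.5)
Your proposal is correct and follows the paper's proof step for step. You use the same substitutions \(x=y=e\), then \(y=e\), then \(x=e\), with identical computations, and the same observation that \(F\geq0\) rules out \(F(e)=-1\).
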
 \begin{proof}
Putting \(x=y=e\) in \eqref{eq:rcl}, we have
\[
2F(e)=2F(e)^2+4F(e),
\]
and hence \(F(e)\bigl(F(e)+1\bigr)=0\). Therefore \(F(e)=0\) or \(F(e)=-1\).

Suppose first that \(F(e)=-1\). Setting \(y=e\) in \eqref{eq:rcl}, we obtain
\[
2F(x)=-2F(x)+2F(x)-2=-2,
\]
so \(F(x)=-1\) for every \(x\in G\).

Suppose now that \(F(e)=0\). Substituting \(x=e\) in \eqref{eq:rcl}, we get
\[
F(y)+F(y^{-1})=2F(y),
\]
and therefore \(F(y^{-1})=F(y)\).

Finally, if \(F\geq0\), the case \(F(e)=-1\) is impossible.
\end{proof}}
We say that \(F\) is \emph{normalized} if \(F(e)=0\), and \emph{reciprocal} if
\(F(x^{-1})=F(x)\) for every \(x\in G\). A solution is \emph{flat} if \(F\equiv0\). 

Let us define
\begin{equation}\label{eq:Hdef}
 H=1+F.
\end{equation}
With this substitution, the linear terms cancel.

\begin{lemma}\label{lem:affine}
A function \(F:G\to\R\) satisfies \eqref{eq:rcl} if and only if
\begin{equation}\label{eq:dalembert}
 H(xy)+H(xy^{-1})=2H(x)H(y),
\end{equation}
where \(H=1+F\).
\end{lemma}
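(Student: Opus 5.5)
The plan is a direct substitution: put \(F=H-1\) into both sides of \eqref{eq:rcl} and compare the result with \eqref{eq:dalembert}. Because \(H=1+F\) and \(F=H-1\) are mutually inverse affine maps between functions \(G\to\R\), a single chain of equivalent equations, valid for each fixed pair \((x,y)\in G\times G\), gives both directions of the ``if and only if'' at once.

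First I rewrite the left-hand side of \eqref{eq:rcl}:
\[
F(xy)+F(xy^{-1})=H(xy)+H(xy^{-1})-2 .
\]
Next I expand the right-hand side:
\[
2\bigl(H(x)-1\bigr)\bigl(H(y)-1\bigr)+2\bigl(H(x)-1\bigr)+2\bigl(H(y)-1\bigr).
\]
Multiplying out, the first product contributes \(2H(x)H(y)-2H(x)-2H(y)+2\). The remaining two terms contribute \(2H(x)+2H(y)-4\). The linear terms in \(H(x)\) and \(H(y)\) cancel, which is the cancellation the paper anticipates just before the lemma, and the right-hand side reduces to \(2H(x)H(y)-2\).

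So for each fixed \((x,y)\), identity \eqref{eq:rcl} is equivalent to
\[
H(xy)+H(xy^{-1})-2=2H(x)H(y)-2,
\]
which in turn is equivalent to \eqref{eq:dalembert} after adding \(2\) to both sides. Quantifying over all \(x,y\in G\) gives the lemma.

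There is no real obstacle here. The only point that needs care is the constant bookkeeping: the \(+2\) from the product term and the \(-4\) from the linear terms must combine to \(-2\), so that they exactly offset the \(-2\) coming from the left-hand side.
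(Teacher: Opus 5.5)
Your proposal is correct and is the same argument as the paper's: substitute \(F=H-1\) into \eqref{eq:rcl}; the linear terms cancel and both sides shift by \(-2\), which leaves \eqref{eq:dalembert}. Your explicit constant bookkeeping, and your remark that the pointwise chain of equivalences gives both directions at once, spell out what the paper's one-line proof leaves implicit.
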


\begin{proof}
Substituting \(F=H-1\) in \eqref{eq:rcl}, we obtain \eqref{eq:dalembert}.
\end{proof}

We use the following  result 
(see Kannappan~\cite{Kannappan1968} and
Stetk{\ae}r \cite{Stetkaer2013}).

\begin{theorem}\label{thm:dalembert-abelian}
Let \(G\) be an abelian group and let \(H:G\to\C\) satisfy
\eqref{eq:dalembert}, with \(H\not\equiv0\). Then there exists a homomorphism
\(\chi:G\to\C^\times\) (\(\C^\times=\C\setminus\{0\}\)) such that
\begin{equation}\label{eq:symchar}
 H(x)=\frac{\chi(x)+\chi(x)^{-1}}{2}
 \qquad (x\in G).
\end{equation}
\end{theorem}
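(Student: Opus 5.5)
We prove Theorem~\ref{thm:dalembert-abelian} in the classical way: first derive the normalization and evenness of \(H\), then find a candidate value \(\chi(x)\) as a root of a quadratic, and finally show these roots can be chosen multiplicatively.

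\textbf{Step 1: normalization and evenness.} Since \(H\not\equiv0\), pick \(x_0\) with \(H(x_0)\neq0\). Setting \(y=e\) in \eqref{eq:dalembert} gives \(2H(x)=2H(x)H(e)\), so \(H(e)=1\). Setting \(x=e\) gives \(H(y)+H(y^{-1})=2H(y)\), so \(H\) is even.

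\textbf{Step 2: a candidate character.} For each \(x\in G\), let \(\chi(x)\) be a root of
\[
t^2-2H(x)t+1=0.
\]
Then \(\chi(x)\neq0\) and \(\chi(x)+\chi(x)^{-1}=2H(x)\). The task is to choose these roots coherently, so that \(\chi(xy)=\chi(x)\chi(y)\).

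\textbf{Step 3a: the case \(H^2\not\equiv1\).} Fix \(a\) with \(H(a)^2\neq1\); then \(\chi(a)\neq\chi(a)^{-1}\). Define
\[
\chi(x)=H(x)+\frac{H(xa)-H(x)H(a)}{\chi(a)-H(a)} .
\]
This ``sine-addition'' form comes from writing \(H=(\chi+\chi^{-1})/2\) and solving for the odd part \(\psi=(\chi-\chi^{-1})/2\), using
\[
\psi(x)\psi(a)=H(xa)-H(x)H(a).
\]
One then verifies three things: \(\chi(x)\chi(x^{-1})=1\), \(\chi(x)+\chi(x^{-1})=2H(x)\), and \(\chi(xy)=\chi(x)\chi(y)\). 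The last check is the main obstacle. The plan is to use the standard consequences of \eqref{eq:dalembert} on an abelian group: with \(g(x)=H(xa)-H(x)H(a)\), the pair \((H,g)\) satisfies
\[
g(xy)=g(x)H(y)+H(x)g(y),
\qquad
g(x)g(y)=\bigl(H(a)^2-1\bigr)\bigl(H(x)H(y)-H(xy)\bigr)\cdot c
\]
for a suitable constant \(c\). These identities are derived by repeated substitutions \(x\mapsto xa\), \(y\mapsto ya\), using commutativity. Given them, \(\chi=H+\kappa g\) with \(\kappa^2\) fixed by the second identity satisfies \(\chi(xy)=\chi(x)\chi(y)\) by direct expansion.

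\textbf{Step 3b: the case \(H^2\equiv1\).} Then \(H\) takes values in \(\{\pm1\}\). Plugging into \eqref{eq:dalembert} shows \(H(xy)=H(x)H(y)\) whenever the values are forced, and in fact \(H\) is itself a character, since the sum of two \(\pm1\) values equalling \(\pm2\) forces both to agree. Take \(\chi=H\).

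Rather than reprove all of this, I expect to simply cite Kannappan~\cite{Kannappan1968} and Stetk{\ae}r~\cite{Stetkaer2013}, where this is carried out. The delicate part is the algebraic verification of multiplicativity in Step 3a, which is where the hypothesis that \(G\) is abelian is used.
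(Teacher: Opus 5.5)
Your proposal is correct and matches the paper, which gives no proof of this theorem and simply cites Kannappan and Stetk{\ae}r, as you plan to do. Your supplementary sketch of the classical sine-addition construction also checks out: the unspecified constant is \(c=-1\), i.e.\ \(g(x)g(y)=\bigl(H(a)^2-1\bigr)\bigl(H(xy)-H(x)H(y)\bigr)\), and since \((\chi(a)-H(a))^2=H(a)^2-1\), your choice \(\kappa=1/(\chi(a)-H(a))\) gives \(\kappa^2\bigl(H(a)^2-1\bigr)=1\), which closes the multiplicativity expansion; the remaining relation \(\chi(x)+\chi(x^{-1})=2H(x)\) follows from the oddness of \(g\), which you should state explicitly.
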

In the normalized nonzero case one has \(H(e)=1\). No regularity assumption on
\(H\) is required.

\begin{lemma}\label{lem:positive-character}
Suppose \(H:G\to[1,\infty)\) is represented as in
\eqref{eq:symchar} by a homomorphism
\(\chi:G\to\C^\times\). Then
\(
\chi(G)\subset\Rp.
\)
\end{lemma}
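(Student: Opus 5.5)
Fix \(x\in G\) and write \(z=\chi(x)\in\C^\times\). By \eqref{eq:symchar}, \((z+z^{-1})/2=H(x)\ge1\) is real. The plan is to show first that \(z\) must be a positive real using only this pointwise constraint, and then to use the group structure of \(G\) to rule out the remaining alternatives.

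\textbf{Step 1 (pointwise analysis).} Let \(c=H(x)\ge1\). Then \(z\) is a root of \(z^2-2cz+1=0\), so
\[
z=c\pm\sqrt{c^2-1}.
\]
Both roots are real, and both are positive because their product is \(1\) and their sum is \(2c>0\). Concretely, \(c-\sqrt{c^2-1}=1/(c+\sqrt{c^2-1})>0\).

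To make this fully explicit without the quadratic, write \(z=re^{i\theta}\) with \(r>0\). Then
\[
z+z^{-1}=(r+r^{-1})\cos\theta+i\,(r-r^{-1})\sin\theta .
\]
The imaginary part vanishes, so either \(r=1\) or \(\sin\theta=0\).
\begin{itemize}
\item If \(r=1\), then \(H(x)=\cos\theta\le1\). Combined with \(H(x)\ge1\) this forces \(\cos\theta=1\), so \(z=1\).
\item If \(\sin\theta=0\), then \(z=\pm r\). The sign \(-\) would give \(H(x)=-(r+r^{-1})/2\le-1<1\), which contradicts \(H(x)\ge1\). Hence \(z=r>0\).
\end{itemize}
In every case \(\chi(x)\in\Rp\).

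\textbf{Step 2 (conclusion).} Since \(x\in G\) was arbitrary, \(\chi(G)\subset\Rp\). The homomorphism property of \(\chi\) is not needed beyond the representation \eqref{eq:symchar} itself.

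The only delicate point is excluding negative real \(z\), which would correspond to \(H\le-1\). The hypothesis \(H\ge1\), coming from \(F\ge0\), handles this directly. I therefore expect no genuine obstacle; the whole argument reduces to the elementary fact that \(w\mapsto(w+w^{-1})/2\) takes values in \([1,\infty)\) on \(\C^\times\) only at positive reals \(w\).
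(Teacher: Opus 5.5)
Your proof is correct, and your polar-coordinate case analysis (if $r=1$, then $\cos\theta=1$ is forced; otherwise $\sin\theta=0$, and the negative sign is excluded because it would give $H\le -1$) is exactly the paper's argument. Your quadratic-root observation ($z=c\pm\sqrt{c^2-1}$, with product $1$ and sum $2c>0$) is an equally valid and slightly shorter alternative, and you are right that the homomorphism property of $\chi$ plays no role.
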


\begin{proof}
Let us fix \(x\in G\) and write
\(\chi(x)=re^{i\theta}\), where \(r>0\).
Then
\[
H(x)=\frac{re^{i\theta}+r^{-1}e^{-i\theta}}{2}
\]
is real and satisfies \(H(x)\geq1\). Its imaginary part is
\[
\frac{r-r^{-1}}{2}\sin\theta.
\]

If \(r=1\), then \(H(x)=\cos\theta\geq1\), so
\(\cos\theta=1\) and \(\chi(x)=1\).

If \(r\neq1\), then \(\sin\theta=0\), so \(\chi(x)\) is real.
The negative possibility gives
\[
H(x)=\frac{-r-r^{-1}}{2}\leq-1,
\]
which is impossible. Hence \(\chi(x)>0\).
\end{proof}
{Writing \(a=\log\circ\chi\), every positive character
\(\chi:G\to\Rp\) gives an additive homomorphism
\(
a:G\to(\R,+).
\)
Conversely, every additive homomorphism \(a:G\to(\R,+)\) gives the
positive character \(\chi=e^a\). Therefore
\begin{equation}\label{eq:Fa}
 F_a(x)=\cosh(a(x))-1.
\end{equation}

{\begin{lemma}\label{lem:two-subgroups}
A group is not the union of two proper subgroups.
\end{lemma}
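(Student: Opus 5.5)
We argue by contradiction, using the standard element-picking argument. Suppose a group \(G\) is written as \(G=A\cup B\) with \(A\) and \(B\) proper subgroups. First we dispose of the degenerate case: if \(A\subseteq B\), then \(G=B\), which contradicts properness of \(B\). By symmetry we may therefore assume that neither subgroup contains the other. Hence there exist \(a\in A\setminus B\) and \(b\in B\setminus A\).

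Next we look at the product \(ab\), which lies in \(G=A\cup B\). We treat the two cases separately.
\begin{itemize}
\item If \(ab\in A\), then \(b=a^{-1}(ab)\in A\), because \(A\) is closed under inverses and products. This contradicts \(b\notin A\).
\item If \(ab\in B\), then \(a=(ab)b^{-1}\in B\), which contradicts \(a\notin B\).
\end{itemize}
Both cases are impossible, so no such decomposition exists.

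The argument does not use commutativity, so it holds for arbitrary groups. In particular it applies to the abelian groups \(G\) of this section and to additive subgroups of \(\R\). None of the steps is a real obstacle; the only point needing care is the preliminary reduction to the case where each subgroup has an element outside the other.

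We expect this lemma to be used later as follows. If a normalized solution, or its associated additive map \(a\), satisfies one of two alternatives pointwise, say \(a(x)=\alpha(x)\) or \(a(x)=-\alpha(x)\), then each alternative defines a subgroup. By the lemma one of these subgroups must be all of \(G\), which yields the global sign identification \(\beta=\pm\alpha\).
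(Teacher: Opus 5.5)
Your proof is correct and takes essentially the same route as the paper: rule out containment of one subgroup in the other, pick \(a\in A\setminus B\) and \(b\in B\setminus A\), and show that \(ab\) lies in neither subgroup. You also spell out the two-case check that \(ab\notin A\cup B\), which the paper leaves implicit, and the paper does use the lemma for the sign alternative \(b=\pm a\) in Lemma~\ref{lem:sign-rigidity}, as you anticipated.
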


\begin{proof}
Let \(K=A\cup B\), where \(A\) and \(B\) are subgroups, and suppose that both
are proper. If \(A\subset B\), then \(K=B\), and if \(B\subset A\), then \(K=A\).
Both cases are impossible since \(A\) and \(B\) are proper subsets of \(K\).
Hence the 
sets $A\setminus B$ and \(B\setminus A\) are nonempty. Let us choose \(u\in A\setminus B\) and \(v\in B\setminus A\).
Then \(uv\) lies in neither \(A\) nor \(B\), which is a contradiction.
\end{proof}

The next lemma determines when two homomorphisms define the same function
\(F_a\).

\begin{lemma}\label{lem:sign-rigidity}
Let \(a,b:G\to(\R,+)\) be homomorphisms. If
\[
 \cosh(a(x))=\cosh(b(x))
 \qquad (x\in G),
\]
then \(b=a\) or \(b=-a\).
\end{lemma}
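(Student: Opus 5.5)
Since \(\cosh\) is even and strictly increasing on \([0,\infty)\), the hypothesis \(\cosh(a(x))=\cosh(b(x))\) is equivalent to \(|a(x)|=|b(x)|\), that is, \(b(x)\in\{a(x),-a(x)\}\) for every \(x\in G\). The plan is to convert this pointwise sign ambiguity into a global one by means of Lemma~\ref{lem:two-subgroups}.

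Define
\[
A=\{x\in G:\ b(x)=a(x)\},\qquad B=\{x\in G:\ b(x)=-a(x)\}.
\]
These are the kernels of the homomorphisms \(b-a\) and \(b+a\) from \(G\) to \((\R,+)\). Hence both are subgroups of \(G\).

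By the first paragraph, every \(x\in G\) lies in \(A\) or in \(B\), so \(G=A\cup B\). Lemma~\ref{lem:two-subgroups} says a group is not the union of two proper subgroups. Therefore \(A=G\) or \(B=G\), which means \(b=a\) or \(b=-a\) on all of \(G\).

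There is essentially no obstacle. The only point requiring care is checking that \(A\) and \(B\) really are subgroups. This holds because the sum and difference of two additive homomorphisms are again homomorphisms, so their kernels are subgroups. Lemma~\ref{lem:two-subgroups} is applied to the group \(K=G\).
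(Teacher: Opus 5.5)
Your proof is correct and follows the paper's argument exactly: the paper also defines the sets where \(b=a\) and \(b=-a\), observes that they cover \(G\), and invokes Lemma~\ref{lem:two-subgroups}. Your justification that these sets are subgroups, as the kernels of \(b-a\) and \(b+a\), supplies a detail the paper leaves implicit.
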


\begin{proof}
For real \(u,v\), the equality \(\cosh u=\cosh v\) holds if and only if
\(v=u\) or \(v=-u\). Hence the subgroups
\[
 A=\{x\in G:b(x)=a(x)\},
 \qquad
 B=\{x\in G:b(x)=-a(x)\}
\]
satisfy \(G=A\cup B\).
By Lemma~\ref{lem:two-subgroups}, \(A=G\) or \(B=G\), and hence
\(b=a\) or \(b=-a\).\end{proof}}

{We use the following elementary fact.

\begin{lemma}\label{lem:single-value}
Fix \(t_0>0\). The map
\(
\lambda\longmapsto\cosh(\lambda t_0)
\)
is strictly increasing on \([0,\infty)\). Consequently, if
\(\lambda,\mu\geq0\) and
\(
\cosh(\lambda t_0)=\cosh(\mu t_0),
\)
then \(\lambda=\mu\).
\end{lemma}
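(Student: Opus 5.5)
The plan is to reduce the claim to the monotonicity of \(\cosh\) on \([0,\infty)\), composed with the linear map \(\lambda\mapsto\lambda t_0\). Since \(t_0>0\), this map is strictly increasing and sends \([0,\infty)\) onto \([0,\infty)\). It therefore suffices to show that \(\cosh\) is strictly increasing on \([0,\infty)\); a composition of two strictly increasing maps is strictly increasing.

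For the monotonicity of \(\cosh\), I would use its derivative \(\cosh'(u)=\sinh u=(e^{u}-e^{-u})/2\). This is strictly positive for \(u>0\), because \(e^{u}>1>e^{-u}\) there, and it vanishes only at \(u=0\). By the mean value theorem, for \(0\le u<v\) there is \(\xi\in(u,v)\) with \(\xi>0\) and
\[
\cosh v-\cosh u=(v-u)\sinh\xi>0 .
\]
Alternatively, one can argue without calculus: for \(0\le u<v\),
\[
\cosh v-\cosh u=2\sinh\tfrac{v+u}{2}\,\sinh\tfrac{v-u}{2}>0,
\]
since both arguments are positive. Either route gives strict monotonicity of \(\lambda\mapsto\cosh(\lambda t_0)\) on \([0,\infty)\).

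The consequence is then immediate. A strictly increasing function is injective, so if \(\lambda,\mu\ge0\) and \(\cosh(\lambda t_0)=\cosh(\mu t_0)\), then \(\lambda=\mu\); otherwise, say \(\lambda<\mu\), strict monotonicity would give a strict inequality. No step here is a real obstacle. The only point needing care is that the restriction to \([0,\infty)\) is essential, since \(\cosh\) is even; on the full line the map is injective only up to the sign \(\lambda\mapsto-\lambda\). This matches the sign identification \(F_\lambda=F_{-\lambda}\) in the paper.
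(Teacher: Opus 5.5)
Your proposal is correct and follows the paper's proof: compose the strictly increasing map \(\lambda\mapsto\lambda t_0\) with the strict monotonicity of \(\cosh\) on \([0,\infty)\), then use injectivity. You also justify the monotonicity of \(\cosh\) (by the mean value theorem or the \(\sinh\) product identity), which the paper simply takes as known.
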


\begin{proof}For \(\lambda\geq0\) the argument \(\lambda t_0\) is nonnegative and strictly
increasing in \(\lambda\), and \(\cosh\) is strictly increasing on
\([0,\infty)\).
\end{proof}

Let \(\Sol_+(G)\) denote the set of nonnegative solutions of
\eqref{eq:rcl} on \(G\).

\begin{proposition}\label{prop:general-moduli}
Let \(G\) be an abelian group. The map
\[
a\longmapsto F_a,
\qquad
F_a(x)=\cosh(a(x))-1,
\]
is surjective from \(\Hom(G,\R)\) onto \(\Sol_+(G)\), and
\[
F_a=F_b
\quad\Longleftrightarrow\quad
b=a\ \text{or}\ b=-a.
\]
Therefore it induces a bijection
\begin{equation}\label{eq:general-moduli}
\Hom(G,\R)/\{\pm1\}
\ \longrightarrow\
\Sol_+(G).
\end{equation}
%The zero solution has the single representative \(a=0\), while every
%nonzero solution has the two representatives \(a\) and \(-a\).
\end{proposition}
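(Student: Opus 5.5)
The plan is to verify three things in turn: that each \(F_a\) is a nonnegative solution, that every nonnegative solution is of this form, and that the fibres of \(a\mapsto F_a\) are exactly \(\{a,-a\}\). The bijection \eqref{eq:general-moduli} then follows formally, since \(\{\pm1\}\) acts on \(\Hom(G,\R)\) by \(a\mapsto -a\) and the orbits are precisely these fibres.

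\emph{Well-definedness.} Let \(a\in\Hom(G,\R)\) and put \(H_a=1+F_a=\cosh\circ a\). Because \(a\) is additive, the identity \(\cosh(u+v)+\cosh(u-v)=2\cosh u\cosh v\) with \(u=a(x)\), \(v=a(y)\) gives \(H_a(xy)+H_a(xy^{-1})=2H_a(x)H_a(y)\). By Lemma~\ref{lem:affine}, \(F_a\) satisfies \eqref{eq:rcl}. Since \(\cosh\geq1\), we also have \(F_a\geq0\). Hence \(F_a\in\Sol_+(G)\).

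\emph{Surjectivity.} Let \(F\in\Sol_+(G)\) and set \(H=1+F\). By Lemma~\ref{lem:affine}, \(H\) satisfies \eqref{eq:dalembert}, and \(H\geq1\), so in particular \(H\not\equiv0\). Theorem~\ref{thm:dalembert-abelian} then supplies a homomorphism \(\chi:G\to\C^\times\) with \(H=(\chi+\chi^{-1})/2\). Since \(H\) takes values in \([1,\infty)\), Lemma~\ref{lem:positive-character} gives \(\chi(G)\subset\Rp\). We may therefore define \(a=\log\circ\chi\), which lies in \(\Hom(G,\R)\). Then \(H=\cosh\circ a\), that is, \(F=F_a\).

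\emph{Fibres.} If \(b=\pm a\), then \(F_b=F_a\) because \(\cosh\) is even. Conversely, \(F_a=F_b\) means \(\cosh(a(x))=\cosh(b(x))\) for all \(x\in G\), and Lemma~\ref{lem:sign-rigidity} gives \(b=\pm a\).

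There is no real obstacle here, since the analytic content is carried by Theorem~\ref{thm:dalembert-abelian} and the group-theoretic content by Lemma~\ref{lem:sign-rigidity}. The only point requiring care is checking that the hypotheses of Theorem~\ref{thm:dalembert-abelian} and Lemma~\ref{lem:positive-character} hold, namely \(H\not\equiv0\) and \(H\geq1\), and both follow directly from \(F\geq0\).
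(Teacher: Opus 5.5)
Your proposal is correct and follows essentially the same route as the paper: the substitution \(H=1+F\), Theorem~\ref{thm:dalembert-abelian}, positivity of \(\chi\) via Lemma~\ref{lem:positive-character}, setting \(a=\log\circ\chi\), the \(\cosh\) addition identity for the converse, and Lemma~\ref{lem:sign-rigidity} for the fibres. You are slightly more explicit than the paper in checking that \(F_a\geq0\) and that \(b=\pm a\) implies \(F_a=F_b\); the paper leaves both implicit.
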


\begin{proof}
Let \(F\in\Sol_+(G)\) and set \(H=1+F\). Then \(H\geq1\), so
\(H\not\equiv0\). By Lemma~\ref{lem:affine},
Theorem~\ref{thm:dalembert-abelian}, and
Lemma~\ref{lem:positive-character}, there is a positive character
\(\chi:G\to\Rp\) such that
\[
F(x)=\frac{\chi(x)+\chi(x)^{-1}}{2}-1.
\]
By substituting \(a=\log\circ\chi\), we obtain
\[
F(x)=\cosh(a(x))-1=F_a(x).
\]
Conversely, if \(a:G\to(\R,+)\) is a homomorphism, then the identity
\[
\cosh(u+v)+\cosh(u-v)=2\cosh u\cosh v
\]
shows that \(F_a\) satisfies \eqref{eq:rcl}. Finally,
Lemma~\ref{lem:sign-rigidity} gives
\[
F_a=F_b
\quad\Longleftrightarrow\quad
b=a\ \text{or}\ b=-a.
\]
\end{proof}}
{If we remove the condition \(F\geq0\), we have two families of real
solutions.

\begin{proposition}\label{prop:real-moduli}
Let \(G\) be an abelian group and let \(F:G\to\R\) satisfy
\eqref{eq:rcl}. Then either \(F\equiv-1\), or one of the following holds:
\(
F(x)=\varepsilon(x)\cosh(a(x))-1,
\)
where
\[
\varepsilon:G\to\{\pm1\},
\qquad
a:G\to(\R,+)
\]
are homomorphisms, or
\(
F(x)=\cos(\theta(x))-1,
\)
where
\(
\theta:G\to\R/2\pi\Z
\)
is a homomorphism.

The nonnegative solutions are
\[
F(x)=\cosh(a(x))-1,
\]
as in Proposition~\ref{prop:general-moduli}.

For \(G=\Qp\), the homomorphisms \(a\), \(\varepsilon\), and \(\theta\)
are determined by their values on the primes, with parameter sets
\[
\prod_p\R,\qquad
\prod_p\{\pm1\},\qquad
\prod_p(\R/2\pi\Z),
\]
respectively.
\end{proposition}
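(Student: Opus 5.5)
We follow the route of Proposition~\ref{prop:general-moduli}, but this time keep the real-valued case open. Assume \(F\not\equiv-1\). By Lemma~\ref{lem:identity-inverse} we then have \(F(e)=0\), so \(H=1+F\) satisfies \(H(e)=1\) and in particular \(H\not\equiv0\). Lemma~\ref{lem:affine} and Theorem~\ref{thm:dalembert-abelian} give a homomorphism \(\chi:G\to\C^\times\) with \(H=(\chi+\chi^{-1})/2\). The task is therefore to classify the characters \(\chi\) for which this \(H\) is real-valued.

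First we do the pointwise analysis, as in the proof of Lemma~\ref{lem:positive-character}. Write \(\chi(x)=r(x)e^{i\phi(x)}\). Then \(\operatorname{Im}H(x)=\tfrac{r-r^{-1}}{2}\sin\phi\), so \(H(x)\in\R\) forces \(r(x)=1\) or \(\chi(x)\in\R\). Set
\[
A=\{x\in G:\ |\chi(x)|=1\},\qquad B=\{x\in G:\ \chi(x)\in\R^\times\}.
\]
Both are subgroups of \(G\), being preimages of the subgroups \(S^1\) and \(\R^\times\) of \(\C^\times\), and \(G=A\cup B\). By Lemma~\ref{lem:two-subgroups}, \(A=G\) or \(B=G\). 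This is the key step, and it is the same trick that was used for sign rigidity.

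If \(A=G\), then \(\chi=e^{i\theta}\) with \(\theta:G\to\R/2\pi\Z\) a homomorphism, and \(H=\cos\theta\). This gives \(F=\cos\theta-1\).

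If \(B=G\), then \(\chi:G\to\R^\times\cong\{\pm1\}\times\Rp\). We decompose \(\chi=\varepsilon\,e^{a}\), where \(\varepsilon=\operatorname{sgn}\chi\) and \(a=\log|\chi|\). Both pieces are homomorphisms, since sign and absolute value are multiplicative. Then \(H=\varepsilon\cosh a\), which gives \(F=\varepsilon\cosh(a)-1\).

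Conversely, these formulas solve \eqref{eq:dalembert}, since they are all of the form \((\chi+\chi^{-1})/2\); one can also check this directly from the addition formulas for \(\cos\) and \(\cosh\) together with \(\varepsilon(xy^{-1})=\varepsilon(xy)\). The two families overlap, for instance in \(F\equiv0\), but the statement only asks that one of them hold.

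For the nonnegative solutions we simply cite Proposition~\ref{prop:general-moduli}. Alternatively, \(F\geq0\) excludes \(F\equiv-1\), and Lemma~\ref{lem:positive-character} forces \(\chi>0\), so \(\varepsilon\equiv1\).

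For \(G=\Qp\), unique factorization identifies \(\Qp\) with the free abelian group on \(\mathcal P\) via \(q\mapsto(v_p(q))_p\). A homomorphism from a free abelian group to any abelian group \(M\) is therefore freely determined by its values on the basis. Taking \(M=\R\), \(\{\pm1\}\), and \(\R/2\pi\Z\) gives the parameter sets \(\prod_p\R\), \(\prod_p\{\pm1\}\), and \(\prod_p(\R/2\pi\Z)\), respectively.

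The main point needing care is the global dichotomy: the pointwise alternative "unimodular or real" must hold uniformly over all of \(G\). Lemma~\ref{lem:two-subgroups} handles exactly this. Everything else is routine.
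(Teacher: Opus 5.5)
Your proposal is correct and follows the paper's proof essentially step for step: you normalize via Lemma~\ref{lem:identity-inverse}, obtain $\chi$ from Theorem~\ref{thm:dalembert-abelian}, note that reality of $(\chi+\chi^{-1})/2$ forces $\chi(x)\in\R^\times$ or $|\chi(x)|=1$ pointwise, and upgrade this to a global dichotomy with Lemma~\ref{lem:two-subgroups}. The differences are cosmetic: the paper tests reality via $(z-\overline z)(1-|z|^{-2})=0$ and handles nonnegativity by directly forcing $\varepsilon\equiv1$ and $\theta\equiv0$, while your explicit free-abelian-group argument for the $\Qp$ parameter sets fills in a point the paper's proof leaves to \eqref{eq:q-direct-sum}.
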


\begin{proof}
By Lemma~\ref{lem:identity-inverse}, either \(F\equiv-1\), or
\(F(e)=0\). In the second case, \(H=1+F\) satisfies \(H(e)=1\), and hence
\(H\not\equiv0\). Theorem~\ref{thm:dalembert-abelian} gives a
homomorphism \(\chi:G\to\C^\times\) such that
\[
H(x)=\frac{\chi(x)+\chi(x)^{-1}}{2}.
\]
For \(z\in\C^\times\), the number \(z+z^{-1}\) is real if and only if
\[
(z-\overline z)(1-|z|^{-2})=0.
\]
Thus \(z\in\R^\times\) or \(|z|=1\). Therefore the subgroups
\(
\chi^{-1}(\R^\times)\) and \(
\chi^{-1}(S^1)
\)
cover \(G\). By Lemma~\ref{lem:two-subgroups}, one of these subgroups is equal to \(G\). 
If \(\chi(G)\subset\R^\times\), write
\[
\varepsilon=\operatorname{sgn}\chi,
\qquad
a=\log|\chi|.
\]
Then \(\chi=\varepsilon e^a\), and
\[
H(x)=\varepsilon(x)\cosh(a(x)).
\]
If \(\chi(G)\subset S^1\), write \(\chi=e^{i\theta}\). Then
\[
H(x)=\cos(\theta(x)).
\]
Finally, \(F\geq0\) means \(H\geq1\). If
\(\chi(G)\subset\R^\times\), then \(H=\varepsilon\cosh a\) with
\(\cosh a\geq1>0\), so \(H\geq1\) implies \(\varepsilon\equiv1\). If
\(\chi(G)\subset S^1\), then \(H=\cos\theta\), so \(H\geq1\) implies
\(\cos\theta\equiv1\) and hence \(\theta\equiv0\).\end{proof}}

\section{The rational character moduli}\label{sec:rational}

{Every \(q\in\Qp\) has a unique factorization
\[
q=\prod_{p\in\mathcal P}p^{v_p(q)},
\]
where \(\mathcal P\) is the set of primes and
\(v_p(q)\in\Z\) is nonzero for only finitely many primes \(p\).
Therefore the map
\(
q\longmapsto \bigl(v_p(q)\bigr)_{p\in\mathcal P}
\)
gives an isomorphism
\begin{equation}\label{eq:q-direct-sum}
\Qp\cong\bigoplus_{p\in\mathcal P}\Z.
\end{equation}

An additive character \(a:\Qp\to\R\) is determined by the family
\(\alpha=(\alpha_p)_{p\in\mathcal P}\), where
\(\alpha_p=a(p)\). It is given by
\begin{equation}\label{eq:aq}
a_\alpha(q)=\sum_{p\in\mathcal P}v_p(q)\alpha_p.
\end{equation}
There is no finite-support condition on \(\alpha\), since the sum in
\eqref{eq:aq} is finite for every \(q\in\Qp\).}

{\begin{theorem}\label{thm:rational-moduli}
The nonnegative solutions of \eqref{eq:rcl} on \(\Qp\) are
\[
F_\alpha(q)
=
\cosh\!\left(
\sum_{p\in\mathcal P}v_p(q)\alpha_p
\right)-1,
\qquad
\alpha\in\prod_{p\in\mathcal P}\R.
\]
Moreover,
\[
F_\alpha=F_\beta
\quad\Longleftrightarrow\quad
\beta=\alpha\ \text{or}\ \beta=-\alpha.
\]
Consequently,
\begin{equation}\label{eq:rational-moduli}
\Sol_+(\Qp)
\cong
\left(
\prod_{p\in\mathcal P}\R
\right)\big/\{\pm1\}.
\end{equation}
\end{theorem}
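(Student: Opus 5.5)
The plan is to specialize Proposition~\ref{prop:general-moduli} to \(G=\Qp\) and then transport \(\Hom(\Qp,\R)\) to \(\prod_{p\in\mathcal P}\R\) through the isomorphism \eqref{eq:q-direct-sum}. Proposition~\ref{prop:general-moduli} already shows that every nonnegative solution on an abelian group has the form \(F_a=\cosh\circ a-1\) with \(a\in\Hom(G,\R)\). It also shows that \(F_a=F_b\) exactly when \(b=\pm a\). So the only remaining task is to identify \(\Hom(\Qp,\R)\) with \(\prod_p\R\) in a way compatible with the sign action.

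First, I will prove that the map \(\Phi:\Hom(\Qp,\R)\to\prod_p\R\), \(a\mapsto(a(p))_{p}\), is a bijection.

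\emph{Injectivity.} If \(a(p)=b(p)\) for all primes \(p\), then for any \(q=\prod_p p^{v_p(q)}\) we get
\[
a(q)=\sum_p v_p(q)a(p)=b(q),
\]
because \(a\) and \(b\) are homomorphisms and the product is finite.

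\emph{Surjectivity.} Given any \(\alpha\in\prod_p\R\), define \(a_\alpha\) by \eqref{eq:aq}. The sum is finite since \(v_p(q)\neq0\) for only finitely many \(p\). The map is additive because \(v_p(qr)=v_p(q)+v_p(r)\), which follows from uniqueness of factorization. Finally \(a_\alpha(p)=\alpha_p\), so \(\Phi(a_\alpha)=\alpha\). Equivalently, this is the universal property of the direct sum \(\bigoplus_p\Z\): homomorphisms from it to \(\R\) correspond to arbitrary families of images of the basis vectors, with no finite-support condition.

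Next, I note that \(\Phi\) is \(\R\)-linear. In particular \(\Phi(-a)=-\Phi(a)\), so it is equivariant for the \(\{\pm1\}\) action. Composing the bijection \eqref{eq:general-moduli} with the induced bijection
\[
\Hom(\Qp,\R)/\{\pm1\}\to\Bigl(\prod_p\R\Bigr)/\{\pm1\}
\]
gives \eqref{eq:rational-moduli}. For the explicit formula, each \(F\in\Sol_+(\Qp)\) equals \(F_a\) for some \(a\), and then \(a=a_\alpha\) with \(\alpha=\Phi(a)\), which gives the stated form of \(F_\alpha\). Conversely, each \(F_\alpha\) is a solution by the converse direction of Proposition~\ref{prop:general-moduli}.

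For the equivalence \(F_\alpha=F_\beta\iff\beta=\pm\alpha\), I apply Lemma~\ref{lem:sign-rigidity} to \(a_\alpha\) and \(a_\beta\) to get \(a_\beta=\pm a_\alpha\). Evaluating at the primes then gives \(\beta=\pm\alpha\). The reverse implication is immediate because \(\cosh\) is even.

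There is no real obstacle here, since all the analytic and group-theoretic content sits in Proposition~\ref{prop:general-moduli}. The step that needs care is the well-definedness and additivity of \(a_\alpha\), which rests on the finiteness of the support of \(v(q)\) and the uniqueness of prime factorization in \(\Qp\).
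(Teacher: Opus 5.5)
Your proposal is correct and takes the same approach as the paper, whose proof simply combines the isomorphism \eqref{eq:q-direct-sum}, the formula \eqref{eq:aq}, and Proposition~\ref{prop:general-moduli}. You spell out the details the paper leaves implicit: the bijection \(\Hom(\Qp,\R)\cong\prod_p\R\), its compatibility with the sign action, and the evaluation at primes that turns \(a_\beta=\pm a_\alpha\) into \(\beta=\pm\alpha\).
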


\begin{proof}
We combine \eqref{eq:q-direct-sum}, \eqref{eq:aq}, and
Proposition~\ref{prop:general-moduli}.
\end{proof}

We call \(F_\alpha\) \emph{definite} if
\[
F_\alpha(q)=0 \quad\Longrightarrow\quad q=1.
\]

\begin{corollary}\label{cor:definite}
For \(\alpha\in\prod_{p\in\mathcal P}\R\), the following are equivalent:
\begin{enumerate}[label=\textup{(\roman*)}]
\item \(F_\alpha(q)=0\) implies \(q=1\);
\item \(a_\alpha:\Qp\to\R\) is injective;
\item the family \((\alpha_p)_{p\in\mathcal P}\) is linearly
independent over \(\Z\);%, meaning that every finite relation
%\[
%\sum_{p\in\mathcal P}n_p\alpha_p=0,
%\qquad n_p\in\Z,
%\]
%has \(n_p=0\) for every \(p\);
\item the family \((\alpha_p)_{p\in\mathcal P}\) is linearly
independent over \(\mathbb Q\).
\end{enumerate}
\end{corollary}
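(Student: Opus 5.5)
I will prove the cycle of equivalences (i)\(\Leftrightarrow\)(ii)\(\Leftrightarrow\)(iii)\(\Leftrightarrow\)(iv), using only the explicit formula \(F_\alpha=\cosh(a_\alpha)-1\) from Theorem~\ref{thm:rational-moduli} and the isomorphism \eqref{eq:q-direct-sum}.

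\emph{(i)\(\Leftrightarrow\)(ii).} Since \(\cosh t=1\) exactly when \(t=0\), we have \(F_\alpha(q)=0\) if and only if \(a_\alpha(q)=0\). So (i) says that \(\ker a_\alpha=\{1\}\). Because \(a_\alpha\) is a homomorphism \(\Qp\to(\R,+)\), trivial kernel is equivalent to injectivity. This gives (i)\(\Leftrightarrow\)(ii).

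\emph{(ii)\(\Leftrightarrow\)(iii).} Under \eqref{eq:q-direct-sum}, an element \(q\) corresponds to the finitely supported integer vector \((n_p)=(v_p(q))\), and every such vector arises from exactly one \(q=\prod_p p^{n_p}\). By \eqref{eq:aq}, \(a_\alpha(q)=\sum_p n_p\alpha_p\). Hence a nontrivial \(q\) with \(a_\alpha(q)=0\) is the same thing as a nontrivial finite integer relation \(\sum_p n_p\alpha_p=0\). Therefore injectivity of \(a_\alpha\) is exactly \(\Z\)-linear independence of the family \((\alpha_p)\).

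\emph{(iii)\(\Leftrightarrow\)(iv).} The implication (iv)\(\Rightarrow\)(iii) is immediate, since \(\Z\subset\mathbb Q\). For (iii)\(\Rightarrow\)(iv), take a finite rational relation \(\sum_p c_p\alpha_p=0\) with not all \(c_p=0\). Multiplying by a common denominator of the finitely many nonzero \(c_p\) produces a nontrivial integer relation, contradicting (iii).

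No step here is a genuine obstacle. The only point needing care is that linear independence of an infinite family refers to finite relations. This matches the finite support of \((v_p(q))\), so the correspondence between elements of \(\ker a_\alpha\) and integer relations is exact.
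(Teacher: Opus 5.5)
Your proposal is correct and follows essentially the same route as the paper. You prove (i)\(\Leftrightarrow\)(ii) from \(\cosh t=1\iff t=0\), (ii)\(\Leftrightarrow\)(iii) by using unique factorization to match kernel elements of \(a_\alpha\) with finite integer relations, and (iii)\(\Leftrightarrow\)(iv) by clearing denominators, only spelling out details the paper leaves implicit.
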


\begin{proof}
The equality \(F_\alpha(q)=0\) is equivalent to
\(a_\alpha(q)=0\). Therefore, \textup{(i)} and \textup{(ii)} are
equivalent. By unique factorization, a nontrivial element of the kernel of
\(a_\alpha\) is equivalent to a nontrivial finite integer relation among
the \(\alpha_p\). This proves the equivalence of \textup{(ii)} and
\textup{(iii)}. Finally, clearing denominators gives the equivalence of
\textup{(iii)} and \textup{(iv)}.
\end{proof}}

{ \begin{remark}\label{rem:definite-generic}
For \(\lambda\neq0\), the family
\(
(\lambda\log p)_{p\in\mathcal P}
\)
is linearly independent over \(\mathbb Q\). Hence every nonflat solution with
\[
\alpha_p=\lambda\log p
\qquad (p\in\mathcal P),
\]
and in particular \(\J\), is definite.

The indefinite locus in \(\prod_{p\in\mathcal P}\R\) is
\[
\bigcup_{n\neq0}
\left\{
\alpha\in\prod_{p\in\mathcal P}\R:
\sum_{p\in\mathcal P}n_p\alpha_p=0
\right\},
\]
where \(n=(n_p)\) ranges over the nonzero finitely supported integer
families. There are only countably many such families. Each set in this
union is closed and has empty interior: if \(n_p\neq0\), a sufficiently
small perturbation of the coordinate \(\alpha_p\) leaves the set while
remaining in the same basic open neighborhood. Therefore the indefinite
locus is meagre and the definite locus is comeagre. Since \(\prod_{p\in\mathcal P}\R\) is a Baire space, the definite locus is
also dense. By
Proposition~\ref{prop:thin}, the locus
\(
\alpha_p=\lambda\log p,
\, p\in\mathcal P,
\)
is nowhere dense.
\end{remark}}

  {\subsection{Single-prime sign changes}

The quotient by the global sign does not identify a sign change in a
single prime coordinate. Let us fix a prime \(p_0\), and define
\begin{equation}\label{eq:axis-twist}
\alpha^{(p_0)}_p=
\begin{cases}
-\log p_0, & p=p_0,\\
\log p, & p\neq p_0.
\end{cases}
\end{equation}
Let 
\[
F^{(p_0)}=F_{\alpha^{(p_0)}}.
\]

\begin{proposition}\label{prop:mixed-detection}
For every prime \(p\), we have
\(
F^{(p_0)}(p)=\J(p).
\)
If \(r\neq p_0\) is a prime, then
\[
F^{(p_0)}(p_0r)
=
\cosh(\log r-\log p_0)-1
\neq
\cosh(\log r+\log p_0)-1
=
\J(p_0r).
\]
\end{proposition}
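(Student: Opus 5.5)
The plan is to compute directly from the defining formula \eqref{eq:aq}, evaluating $a_{\alpha^{(p_0)}}$ on the two kinds of test elements. Then we compare the results with $\J=F_{(\log p)_p}$, using the fact that $\cosh$ is even and strictly increasing on $[0,\infty)$ (Lemma~\ref{lem:single-value} with $t_0=1$).

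\emph{Single primes.} For a prime $p$ we have $v_p(p)=1$ and all other valuations vanish, so $a_{\alpha^{(p_0)}}(p)=\alpha^{(p_0)}_p$. If $p\neq p_0$ this equals $\log p$. If $p=p_0$ it equals $-\log p_0$. In both cases
\[
F^{(p_0)}(p)=\cosh(\pm\log p)-1=\cosh(\log p)-1=\J(p),
\]
by evenness of $\cosh$ and the identity $\J(x)=\cosh(\log x)-1$.

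\emph{Products of two primes.} For the product $p_0r$ with $r\neq p_0$ prime, unique factorization gives $v_{p_0}(p_0r)=v_r(p_0r)=1$ and all other valuations zero. Hence
\[
a_{\alpha^{(p_0)}}(p_0r)=-\log p_0+\log r,
\qquad
\log(p_0r)=\log p_0+\log r.
\]
This yields the two displayed closed forms for $F^{(p_0)}(p_0r)$ and $\J(p_0r)$.

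\emph{Strict inequality.} Set $u=\log r-\log p_0$ and $v=\log r+\log p_0$. Then $|u|<v$, because $\log p_0>0$ and $\log r>0$ give $v-|u|=2\min(\log p_0,\log r)>0$. Since $\cosh$ is even and strictly increasing on $[0,\infty)$, it follows that $\cosh u=\cosh|u|<\cosh v$, which gives the strict inequality.

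The only step needing any care is this last one. Equality of $\cosh$ values would force $u=\pm v$, which would mean $\log p_0=0$ or $\log r=0$, and both are impossible for primes. Everything else is direct substitution.
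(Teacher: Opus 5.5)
Your proposal is correct and follows the same route as the paper: evenness of \(\cosh\) for the single-prime values, then \(|\log r-\log p_0|<\log r+\log p_0\) together with strict monotonicity of \(\cosh\) on \([0,\infty)\) for the mixed product. You also spell out the valuation computations and the identity \(v-|u|=2\min(\log p_0,\log r)\), which the paper leaves implicit.
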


\begin{proof}
The first equality follows from the evenness of \(\cosh\). Since
\(\log p_0>0\) and \(\log r>0\),
\[
|\log r-\log p_0|
<
\log r+\log p_0.
\]
The strict monotonicity of \(\cosh\) on \([0,\infty)\) gives the second
claim.
\end{proof}}

 {\section{The regular-extension boundary}\label{sec:extension}

Let
\begin{equation}\label{ll}
L=\log\Qp
=
\left\{
\sum_{p\in\mathcal P}n_p\log p:
n_p\in\Z,\ \supp(n)\ \text{finite}
\right\}
\subset\R.
\end{equation}
The family \((\log p)_{p\in\mathcal P}\) is linearly independent over
\(\mathbb Q\). After clearing denominators, any finite rational
relation gives
\[
\sum_{p\in\mathcal P}m_p\log p=0,
\qquad m_p\in\Z,
\]
and hence
\[
\prod_{p\in\mathcal P}p^{m_p}=1.
\]
By unique factorization, \(m_p=0\) for every \(p\).

Therefore every family
\[
\alpha=(\alpha_p)_{p\in\mathcal P}
\in\prod_{p\in\mathcal P}\R
\]
defines an additive map \(a_\alpha:L\to\R\) by
\[
a_\alpha\!\left(
\sum_{p\in\mathcal P}n_p\log p
\right)
=
\sum_{p\in\mathcal P}n_p\alpha_p.
\]

The further results use dense additive subgroups of \(\R\). The next lemma
shows that \(L\) has this property.

\begin{lemma}\label{lem:density}
Let \(r\) and \(s\) be distinct primes. Then
\(
\Z\log r+\Z\log s
\)
is dense in \(\R\). Consequently, \(L\) is dense in \(\R\), and the
subgroup of \(\Qp\) generated by \(r\) and \(s\) is dense in \(\Rp\).
\end{lemma}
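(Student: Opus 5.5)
The plan is to use the standard dichotomy for subgroups of \(\R\): every additive subgroup \(\Gamma\le\R\) is either cyclic (of the form \(\Z c\)) or dense. I would apply it to \(\Gamma=\Z\log r+\Z\log s\), and the real content is to rule out the cyclic case. For completeness I would include the short proof of the dichotomy. Suppose \(\Gamma\neq\{0\}\) and set \(c=\inf(\Gamma\cap(0,\infty))\). If \(c>0\), then \(c\in\Gamma\), because otherwise we could find elements \(c<g_1<g_2<2c\) in \(\Gamma\), and \(g_2-g_1\in(0,c)\) would contradict the definition of \(c\). Division with remainder then gives \(\Gamma=\Z c\). If \(c=0\), then \(\Gamma\) contains arbitrarily small positive elements \(g\), and every interval of length greater than \(g\) contains a multiple of \(g\), so \(\Gamma\) is dense.

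It remains to exclude \(\Gamma=\Z c\) with \(c>0\). In that case \(\log r=mc\) and \(\log s=nc\) for nonzero integers \(m,n\). This gives \(n\log r-m\log s=0\), which is a nontrivial integer relation. Exponentiating yields \(r^n=s^m\), and unique factorization forbids this since \(r\neq s\). Equivalently, I can quote the \(\mathbb Q\)-linear independence of \((\log p)_{p\in\mathcal P}\) established just above the lemma. Hence \(\Gamma\) is dense.

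The consequences follow quickly. Since \(L\supseteq\Z\log 2+\Z\log 3\), the set \(L\) is dense in \(\R\). The subgroup of \(\Qp\) generated by \(r\) and \(s\) is \(\{r^a s^b:a,b\in\Z\}\), and it equals \(\exp(\Gamma)\). Because \(\exp:\R\to\Rp\) is a homeomorphism, the image of a dense set is dense, so this subgroup is dense in \(\Rp\).

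The only step with real content is the dichotomy for subgroups of \(\R\), in particular checking that the infimum is attained when it is positive. Once that is in place, the argument is routine.
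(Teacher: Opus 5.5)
Your proof is correct and takes essentially the same route as the paper. The paper uses unique factorization to show that \(\log r/\log s\) is irrational and then cites the standard density fact. You make that fact explicit through the cyclic-or-dense dichotomy, which you prove correctly, and you deduce the statements about \(L\) and about the subgroup generated by \(r\) and \(s\) in the same way, via \(L\supseteq\Z\log2+\Z\log3\) and the homeomorphism \(\exp\).
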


\begin{proof}
The ratio \(\log r/\log s\) is irrational. Otherwise, there exist
positive integers \(m,n\) such that
\[
m\log r=n\log s,
\]
and hence \(r^m=s^n\), which is a contradiction to unique factorization.

Therefore the additive subgroup
\(
\Z\log r+\Z\log s
\)
is dense in \(\R\). Since
\(
L\supset \Z\log 2+\Z\log 3,
\)
the subgroup \(L\) is dense in \(\R\). The last statement follows from the fact
that \(\exp:\R\to\Rp\) is a homeomorphism.
\end{proof}}

{\subsection{Algebraic extensions}

\begin{proposition}\label{prop:algebraic-extension}
Every additive map \(a:L\to\R\) extends to an additive map
\(A:\R\to\R\). Consequently, every nonnegative rational solution
\(F_\alpha\) extends to a nonnegative solution of \eqref{eq:rcl} on
\(\Rp\).
\end{proposition}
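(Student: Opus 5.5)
The argument views $\R$ as a vector space over $\mathbb Q$ and extends $a$ linearly; no analysis is needed. First, $a$ is $\mathbb Q$-linear on the $\mathbb Q$-span of $L$. Let $V=\mathbb Q L=\operatorname{span}_{\mathbb Q}\{\log p:p\in\mathcal P\}$. Since $(\log p)_{p\in\mathcal P}$ is linearly independent over $\mathbb Q$ (as shown just before Lemma~\ref{lem:density}), it is a $\mathbb Q$-basis of $V$. We define $\tilde a:V\to\R$ by
\[
\tilde a\Bigl(\sum_p c_p\log p\Bigr)=\sum_p c_p\alpha_p,\qquad c_p\in\mathbb Q,
\]
where $\alpha_p=a(\log p)$. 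This is well defined and $\mathbb Q$-linear, and it agrees with $a$ on $L$ because $a$ is additive, so $a(\sum n_p\log p)=\sum n_p\alpha_p$. Alternatively, one may note that any additive map on a subgroup of a torsion-free divisible group extends uniquely to the divisible hull. Here the basis description makes this immediate.

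Second, we extend $\tilde a$ from $V$ to $\R$. By Zorn's lemma, extend the $\mathbb Q$-basis $\{\log p\}$ of $V$ to a Hamel basis $\mathcal B$ of $\R$ over $\mathbb Q$. We set $A(\log p)=\alpha_p$, set $A(b)=0$ for $b\in\mathcal B\setminus\{\log p\}$, and extend $\mathbb Q$-linearly. Then $A:\R\to\R$ is additive and $A|_L=a$. This use of the axiom of choice is the only genuinely nonconstructive point and the only real obstacle; it is unavoidable, because when $\alpha$ is off the line $\alpha_p=\lambda\log p$ the extension cannot be measurable. One could instead take $\mathcal B$ to contain a complement of $V$ directly, but that also requires choice.

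Third, we transfer to $\Rp$. Define $\widetilde F(x)=\cosh(A(\log x))-1$ for $x\in\Rp$. The map $x\mapsto A(\log x)$ is a homomorphism $\Rp\to(\R,+)$, so by Proposition~\ref{prop:general-moduli} (the converse direction, via $\cosh(u+v)+\cosh(u-v)=2\cosh u\cosh v$), $\widetilde F$ satisfies \eqref{eq:rcl} and is nonnegative. For $q\in\Qp$ we have $\log q=\sum_p v_p(q)\log p\in L$, hence
\[
A(\log q)=\sum_p v_p(q)\alpha_p=a_\alpha(q),
\]
and therefore $\widetilde F|_{\Qp}=F_\alpha$ by Theorem~\ref{thm:rational-moduli}. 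Since every nonnegative rational solution has the form $F_\alpha$, this completes the plan.
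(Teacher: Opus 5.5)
Your proof is correct and follows essentially the same route as the paper. You extend $a$ by $\mathbb Q$-linearity to $V=\operatorname{span}_{\mathbb Q}L$, extend a basis of $V$ (you take the explicit basis $\{\log p\}$) to a Hamel basis of $\R$, prescribe values freely on the new basis vectors (you choose $0$, the paper allows arbitrary values), and then set $\widetilde F(x)=\cosh(A(\log x))-1$.
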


\begin{proof}
First we extend \(a\) uniquely by rational linearity to
\(
V=\operatorname{span}_{\mathbb Q}L.
\)
Let us choose a Hamel basis of \(V\) and extend it to a Hamel basis of \(\R\)
over \(\mathbb Q\). We assign arbitrary real values to the additional basis
elements. The resulting \(\mathbb Q\)-linear map
\(
A:\R\to\R
\)
extends \(a\).

Let us define
\[
\widetilde F_A(x)=\cosh(A(\log x))-1.
\]
It is nonnegative, and by the addition formula for \(\cosh\) used in
Proposition~\ref{prop:general-moduli}, it satisfies \eqref{eq:rcl}. Its
restriction to \(\Qp\) is \(F_\alpha\).
\end{proof}

\begin{remark}
The extension from \(L\) to \(V=\operatorname{span}_{\mathbb Q}L\) is unique.
Choice is used only in extending from \(V\) to \(\R\) through a Hamel basis, and this extension is not unique. Any additive extension
\(A:\R\to\R\) not of the form
\(
A(t)=\lambda t
\)
is discontinuous and not Lebesgue measurable. Thus Proposition \ref{prop:algebraic-extension} gives
only algebraic extensions; no regularity is implied.
\end{remark}
}

 {\subsection{Regularity and the logarithmic line}

For functions on \(L\), continuity and local boundedness are used with
respect to the subspace topology inherited from \(\R\).

\begin{lemma}\label{lem:bounded-additive}
Let \(D\subset\R\) be a dense additive subgroup, and let
\(a:D\to\R\) be additive. The following are equivalent:
\begin{enumerate}[label=\textup{(\roman*)},leftmargin=2.2em]
\item \(a\) is continuous at \(0\);
\item \(a\) is bounded on a neighborhood of \(0\) in \(D\);
\item there is a unique \(\lambda\in\R\) such that \(a(t)=\lambda t\)
for every \(t\in D\).
\end{enumerate}
\end{lemma}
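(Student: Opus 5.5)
The plan is to prove the cycle of implications (iii)\(\Rightarrow\)(i)\(\Rightarrow\)(ii)\(\Rightarrow\)(iii), and then to check uniqueness of \(\lambda\) separately.

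(iii)\(\Rightarrow\)(i) is immediate, since \(|a(t)|=|\lambda||t|\to0\) as \(t\to0\) in \(D\).

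(i)\(\Rightarrow\)(ii) also follows directly. Continuity at \(0\) with \(a(0)=0\) gives a \(\delta>0\) such that \(|a(t)|\le1\) for all \(t\in D\) with \(|t|<\delta\).

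The core step is (ii)\(\Rightarrow\)(iii). Suppose \(|a(t)|\le M\) for \(t\in D\cap(-\delta,\delta)\). I will first upgrade this to continuity at \(0\) with a linear modulus. Given \(t\in D\) with \(|t|<\delta/n\), the multiple \(nt\) lies in \(D\cap(-\delta,\delta)\). Additivity then gives \(|a(t)|=|a(nt)|/n\le M/n\). Hence \(|a(t)|\le M/n\) whenever \(|t|<\delta/n\), which gives \(|a(t)|\le C|t|\) on \(D\cap(-\delta,\delta)\) with, say, \(C=2M/\delta\). To see this, for \(0<|t|<\delta\), choose \(n=\lfloor \delta/|t|\rfloor\ge1\), so that \(n\ge \delta/(2|t|)\) and \(|t|<\delta/n\). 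Additivity then spreads this to uniform continuity on all of \(D\), since \(|a(s)-a(t)|=|a(s-t)|\le C|s-t|\) for \(|s-t|<\delta\).

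Next I extend \(a\) to \(\R\). A uniformly continuous map on the dense set \(D\) extends uniquely to a continuous \(A:\R\to\R\), defined by \(A(x)=\lim a(t_k)\) for \(t_k\to x\) with \(t_k\in D\). Additivity passes to the limit, because \(t_k+s_k\to x+y\). Then \(A\) is a continuous additive function, hence \(\mathbb Q\)-linear, and by density of \(\mathbb Q\) and continuity \(A(x)=xA(1)\). Setting \(\lambda=A(1)\) gives \(a(t)=\lambda t\) on \(D\).

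A more elementary alternative avoids the extension. Fix \(t_0\in D\setminus\{0\}\), which exists because \(D\) is dense and hence nontrivial, and put \(\lambda=a(t_0)/t_0\). The map \(b(t)=a(t)-\lambda t\) is again additive, satisfies the same local Lipschitz bound, and vanishes on \(\Z t_0\). For any \(t\in D\), Dirichlet approximation gives integers \(n\) and \(m\ge1\) with \(|mt-nt_0|\) arbitrarily small, and \(mt-nt_0\in D\). Then \(m|b(t)|=|b(mt-nt_0)|\le C'|mt-nt_0|\), and choosing these approximations so that \(|mt-nt_0|/m\to0\) forces \(b(t)=0\). I would present the extension argument, as it is cleaner.

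Uniqueness of \(\lambda\) holds because \(D\) contains a nonzero \(t_0\), and \(\lambda t_0=\mu t_0\) forces \(\lambda=\mu\).

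The main obstacle is the step in (ii)\(\Rightarrow\)(iii) that converts mere boundedness into a linear bound, via the division by \(n\). This is the only place where the additivity and the density of \(D\) genuinely interact; everything after it is routine extension by continuity.
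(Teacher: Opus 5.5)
Your proposal is correct and follows essentially the same route as the paper: you divide by \(n\) to turn local boundedness into continuity at \(0\), use additivity to get uniform continuity on \(D\), and use density to extend to a continuous additive map on \(\R\), which is linear. Uniqueness of \(\lambda\) then comes from a nonzero element of \(D\). The Lipschitz upgrade is not needed, since \(|a(t)|\le M/n\) for \(|t|<\delta/n\) already gives continuity at \(0\); as written it also has a boundary slip, because \(|t|<\delta/n\) fails for \(n=\lfloor\delta/|t|\rfloor\) when \(\delta/|t|\) is an integer, which is fixed by running that step with \(\delta/2\) in place of \(\delta\).
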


\begin{proof}
Condition \textup{(iii)} implies \textup{(i)} and \textup{(ii)}, and
\textup{(i)} implies \textup{(ii)}.

Assume \textup{(ii)}. Then there are \(M,\delta>0\) such that
\[
|a(t)|\leq M
\qquad
(t\in D,\ |t|<\delta).
\]
Let \(\varepsilon>0\), and choose an integer \(n>M/\varepsilon\). If
\(t\in D\) and \(|t|<\delta/n\), then \(|nt|<\delta\), and therefore
\[
n|a(t)|=|a(nt)|\leq M.
\]
Hence
\[
|a(t)|\leq\frac{M}{n}<\varepsilon.
\]
Thus \(a\) is continuous at \(0\). Since \(a\) is additive, it is then uniformly continuous on \(D\).

By density, \(a\) extends uniquely to a continuous additive map
\(A:\R\to\R\). Every continuous additive map on \(\R\) has the form
\(
A(t)=\lambda t
\)
for some \(\lambda\in\R\). Therefore \(a(t)=\lambda t\) for every
\(t\in D\). The parameter \(\lambda\) is unique because \(D\) contains
a nonzero element.
\end{proof}}
{
{\begin{lemma}\label{lem:cost-controls}
Let \(D\subset\R\) be an additive subgroup, let \(a:D\to\R\) be additive, and
define
\[
f(t)=\cosh(a(t))-1.
\]
Then \(f\) is continuous at \(0\) if and only if \(a\) is continuous at
\(0\). Moreover, \(f\) is bounded on a neighborhood of \(0\) if and only if
\(a\) is bounded on a neighborhood of \(0\).
\end{lemma}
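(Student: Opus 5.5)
The plan is to reduce both equivalences to properties of the even function \(\phi(u)=\cosh u-1\) on \(\R\), applied pointwise to \(a(t)\). Since \(a\) is additive, \(a(0)=0\) and \(f(0)=0\). The function \(\phi\) is continuous, vanishes only at \(0\), and is strictly increasing on \([0,\infty)\) (Lemma~\ref{lem:single-value}). It is also even and tends to \(\infty\) as \(|u|\to\infty\). Its restriction \(\phi:[0,\infty)\to[0,\infty)\) is therefore a homeomorphism, and \(f(t)=\phi(|a(t)|)\).

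\emph{Continuity at \(0\).} If \(a\) is continuous at \(0\), then \(f=\phi\circ a\) is continuous at \(0\), since \(\phi\) is continuous. Conversely, suppose \(f\) is continuous at \(0\), so \(f(t)\to0\) as \(t\to0\) in \(D\). Applying the continuous inverse \(\phi^{-1}:[0,\infty)\to[0,\infty)\) gives
\[
|a(t)|=\phi^{-1}(f(t))\to\phi^{-1}(0)=0 .
\]
Because \(a(0)=0\), this is exactly continuity of \(a\) at \(0\). Equivalently, for \(\varepsilon>0\) we may take \(\eta=\cosh\varepsilon-1>0\): then \(f(t)<\eta\) forces \(|a(t)|<\varepsilon\) by strict monotonicity.

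\emph{Boundedness near \(0\).} We use the same neighborhood \(U\) of \(0\) in \(D\) in both directions. If \(|a|\le M\) on \(U\), then \(0\le f\le\cosh M-1\) on \(U\). If \(f\le K\) on \(U\), then \(\cosh(a(t))\le K+1\), and monotonicity of \(\cosh\) on \([0,\infty)\) gives \(|a(t)|\le\arcosh(K+1)\) on \(U\).

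There is no genuine obstacle here. The only point needing care is the converse directions, which rely on \(\cosh\) being injective and proper on \([0,\infty)\) once we pass to \(|a(t)|\) using evenness. Density of \(D\) plays no role in this lemma.
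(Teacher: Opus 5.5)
Your proposal is correct and follows essentially the same route as the paper: the forward directions come from continuity of \(\cosh\), and the converses from \(|a(t)|=\arcosh(f(t)+1)\) together with continuity and monotonicity of \(\arcosh\) on \([1,\infty)\), which is your \(\phi^{-1}\). Your version only spells out the details the paper leaves implicit: that \(a(0)=f(0)=0\), the explicit choice \(\eta=\cosh\varepsilon-1\), and the fact that the same neighborhood works in both directions.
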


\begin{proof}
The forward implications follow from the continuity of \(\cosh\).
Conversely,
\[
|a(t)|=\arcosh(f(t)+1).
\]
Since \(\arcosh\) is continuous and increasing on \([1,\infty)\),
continuity or local boundedness of \(f\) at \(0\) gives the corresponding
property for \(a\).
\end{proof}}}
{\begin{lemma}\label{lem:blowup}
Let \(D\subset\R\) be a dense additive subgroup, let \(a:D\to\R\) be
additive, and suppose that \(a\) is not of the form
\(
a(t)=\lambda t.
\)
Then
\[
f(t)=\cosh(a(t))-1
\]
is unbounded on every nonempty open subset of \(D\).
\end{lemma}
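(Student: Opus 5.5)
We argue by contradiction and reduce the claim to a statement about \(a\) alone. Suppose \(f\) is bounded on some nonempty open subset of \(D\). Then there are \(t_0\in D\), \(\delta>0\) and \(M\ge0\) such that
\[
f(t)\le M\qquad (t\in D,\ |t-t_0|<\delta).
\]
Since \(|a(t)|=\arcosh(f(t)+1)\) and \(\arcosh\) is increasing on \([1,\infty)\), this gives \(|a(t)|\le K:=\arcosh(M+1)\) on the same set. So it suffices to show that \(a\) itself cannot be bounded on a nonempty open subset of \(D\) unless it is linear.

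Next we translate the bound to the origin, using additivity. If \(s\in D\) and \(|s|<\delta\), then \(t_0+s\in D\) and \(|(t_0+s)-t_0|<\delta\). Hence
\[
|a(s)|=|a(t_0+s)-a(t_0)|\le K+|a(t_0)|.
\]
Thus \(a\) is bounded on the neighborhood \(\{s\in D:|s|<\delta\}\) of \(0\) in \(D\). By Lemma~\ref{lem:bounded-additive} (ii)\(\Rightarrow\)(iii), which uses that \(D\) is dense, there is \(\lambda\in\R\) with \(a(t)=\lambda t\) for all \(t\in D\). This contradicts the hypothesis on \(a\), so \(f\) is unbounded on every nonempty open subset of \(D\).

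No step is a serious obstacle. The only point needing care is the translation step: the open set must be taken relative to \(D\), and the shift by \(t_0\) must stay inside \(D\). Both hold because \(D\) is a subgroup, so \(t_0+s\in D\) whenever \(s\in D\). Lemma~\ref{lem:cost-controls} could replace the \(\arcosh\) step, but it is stated only at \(0\), so transferring the bound directly as above is cleaner.
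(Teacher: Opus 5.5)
Your proof is correct and follows essentially the same route as the paper: both combine Lemma~\ref{lem:bounded-additive} with the translation identity \(a(t_0+s)=a(t_0)+a(s)\) and the relation \(\cosh(a)=f+1\). The only difference is direction: you argue by contraposition, with \(f\) bounded near \(t_0\) forcing \(a\) bounded near \(0\) via \(\arcosh\), while the paper pushes the unboundedness of \(a\) near \(0\) forward to the unboundedness of \(f\) on \(t_0+V\) via \(\cosh x\to\infty\).
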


\begin{proof}
By Lemma~\ref{lem:bounded-additive}, \(a\) is unbounded in absolute value
on every neighborhood of \(0\).

Let \(U\subset D\) be nonempty and open, and choose \(t_0\in U\). There is
a neighborhood \(V\) of \(0\) in \(D\) such that
\[
t_0+V\subset U.
\]
Since
\[
a(t_0+v)=a(t_0)+a(v),
\]
the function \(a\) is unbounded in absolute value on \(t_0+V\). Since
\(\cosh x\to\infty\) as \(|x|\to\infty\), \(f\) is unbounded on
\(t_0+V\), and hence on \(U\).
\end{proof}}

\begin{corollary}\label{cor:local-regularity}
Let \(D\subset\R\) be a dense additive subgroup, let \(a:D\to\R\) be additive,
and let
\[
f(t)=\cosh(a(t))-1.
\]
Then the following are equivalent:
\begin{enumerate}[label=\textup{(\roman*)}]
\item \(f\) is continuous at some point of \(D\);
\item \(f\) is bounded on some nonempty open subset of \(D\);
\item there is a \(\lambda\in\R\) such that \(a(t)=\lambda t\) for every \(t\in D\).
\end{enumerate}
\end{corollary}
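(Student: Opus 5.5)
The plan is to close the cycle \textup{(iii)}\(\Rightarrow\)\textup{(i)}\(\Rightarrow\)\textup{(ii)}\(\Rightarrow\)\textup{(iii)}, leaning on Lemmas~\ref{lem:bounded-additive}, \ref{lem:cost-controls} and \ref{lem:blowup}.

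First, \textup{(iii)}\(\Rightarrow\)\textup{(i)}. If \(a(t)=\lambda t\) on \(D\), then \(f(t)=\cosh(\lambda t)-1\) is the restriction to \(D\) of a continuous function on \(\R\). Hence \(f\) is continuous at every point of \(D\), and in particular at some point.

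Next, \textup{(i)}\(\Rightarrow\)\textup{(ii)}. Suppose \(f\) is continuous at \(t_0\in D\). Then there is \(\delta>0\) with \(|f(t)-f(t_0)|<1\) for \(t\in D\), \(|t-t_0|<\delta\). So \(f\) is bounded on the set \(D\cap(t_0-\delta,t_0+\delta)\), which is nonempty (it contains \(t_0\)) and open in \(D\).

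Finally, \textup{(ii)}\(\Rightarrow\)\textup{(iii)}, argued by contraposition. If \(a\) is not of the form \(\lambda t\), then Lemma~\ref{lem:blowup} (which uses that \(D\) is dense) makes \(f\) unbounded on every nonempty open subset of \(D\). That contradicts \textup{(ii)}.

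An alternative direct route avoids Lemma~\ref{lem:blowup}. If \(f\le M\) on \(D\cap(t_0-\delta,t_0+\delta)\), then \(|a|\le \arcosh(M+1)\) there. Translating by additivity, \(a(v)=a(t_0+v)-a(t_0)\), gives boundedness of \(a\) on a neighborhood of \(0\). Lemma~\ref{lem:bounded-additive} then yields \textup{(iii)}.

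There is no real obstacle here. The only point needing care is in \textup{(ii)}\(\Rightarrow\)\textup{(iii)}: the open set in \textup{(ii)} need not contain \(0\), so we must move the boundedness to \(0\). Additivity of \(a\) handles this, either inside Lemma~\ref{lem:blowup} or through the explicit translation above. We also use that \(\arcosh\) is increasing, so that a bound on \(f\) gives a bound on \(|a|\) (Lemma~\ref{lem:cost-controls}).
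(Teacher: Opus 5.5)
Your proof is correct and follows the paper's own argument: \textup{(iii)}\(\Rightarrow\)\textup{(i)}\(\Rightarrow\)\textup{(ii)} is immediate, and \textup{(ii)}\(\Rightarrow\)\textup{(iii)} comes from Lemma~\ref{lem:blowup}, exactly as in the paper. Your alternative direct route just unpacks that lemma: it bounds \(|a|\) on the open set through \(\arcosh\), translates the bound to \(0\) by additivity, and then applies Lemma~\ref{lem:bounded-additive}; it is also valid.
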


\begin{proof}
Condition \textup{(iii)} implies \textup{(i)} and \textup{(ii)}, while
\textup{(i)} implies \textup{(ii)} by continuity. Finally,
\textup{(ii)} implies \textup{(iii)} by Lemma~\ref{lem:blowup}.
\end{proof}

{The next lemma gives the measurable case. The monotone case is treated in
Theorem~\ref{thm:monotone-collapse}.

\begin{lemma}\label{lem:measurable-additive}
Let \(A:\R\to\R\) be additive. Suppose that \(A\) is bounded on a
Lebesgue measurable set of positive measure. Then there is a unique
\(\lambda\in\R\) such that
\(
A(t)=\lambda t
\, (t\in\R).
\)
In particular, the conclusion holds if \(|A|\) is Lebesgue measurable.
\end{lemma}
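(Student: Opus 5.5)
The plan is to reduce to Lemma~\ref{lem:bounded-additive} applied with \(D=\R\): once \(A\) is bounded on some interval around \(0\), it is linear. This is the classical Steinhaus argument. Let \(E\) be a Lebesgue measurable set of positive measure with \(|A|\le M\) on \(E\). By Steinhaus' theorem, the difference set \(E-E\) contains an open interval \((-\delta,\delta)\). For \(t\in(-\delta,\delta)\), write \(t=x-y\) with \(x,y\in E\). Additivity gives \(A(t)=A(x)-A(y)\), hence \(|A(t)|\le 2M\). So \(A\) is bounded on a neighborhood of \(0\) in \(\R\). Lemma~\ref{lem:bounded-additive}, with \(D=\R\), which is trivially dense, then gives a unique \(\lambda\) with \(A(t)=\lambda t\). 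Uniqueness holds because \(\R\) contains a nonzero element.

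For the ``in particular'' clause, suppose \(|A|\) is Lebesgue measurable. Then the sets \(E_n=\{t:|A(t)|\le n\}\) are measurable, and they exhaust \(\R\). By countable subadditivity, some \(E_n\) has positive measure. On that set \(|A|\le n\), so the first part applies.

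The only nontrivial input is Steinhaus' theorem. It is classical and I would cite it rather than reprove it. If a self-contained argument is wanted, the route is as follows. Shrink \(E\) to a compact set \(K\) of positive measure and pick an open \(U\supset K\) with \(|U|<2|K|\). For \(|t|\) smaller than the distance from \(K\) to \(\R\setminus U\), the translate \(K+t\) lies in \(U\). If \(K\) and \(K+t\) were disjoint, we would get \(2|K|\le|U|\), a contradiction. So \(K\) meets \(K+t\), which means \(t\in K-K\subset E-E\). This is the main, though standard, obstacle; the rest is immediate from earlier lemmas.
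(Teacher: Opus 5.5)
Your proof is correct and follows the paper's route: Steinhaus's theorem bounds \(A\) near \(0\), Lemma~\ref{lem:bounded-additive} with \(D=\R\) gives linearity, and the measurable case comes from exhausting by sublevel sets (the paper exhausts \([0,1]\) instead of \(\R\), which makes no difference). Your optional self-contained proof of Steinhaus's theorem, via a compact \(K\subset E\) and an open \(U\supset K\) with \(|U|<2|K|\), is also correct.
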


\begin{proof}
Let \(E\subset\R\) be measurable with positive measure, and suppose that
\[
|A(t)|\leq M
\qquad (t\in E).
\]
By Steinhaus's theorem~\cite{Steinhaus1920}, the set \(E-E\) contains an
interval \((-\delta,\delta)\). Thus, for every \(|t|<\delta\), there are
\(u,v\in E\) such that \(t=u-v\). Hence
\[
|A(t)|
=
|A(u)-A(v)|
\leq 2M.
\]
Therefore \(A\) is bounded on a neighborhood of \(0\). By
Lemma~\ref{lem:bounded-additive}, there is a unique \(\lambda\in\R\)
such that
\(
A(t)=\lambda t
,\, t\in\R.
\)

Suppose now that \(|A|\) is measurable. The sets
\[
E_n=\{t\in[0,1]:|A(t)|\leq n\},
\qquad n\in\mathbb N,
\]
are measurable and satisfy
\[
[0,1]=\bigcup_{n\in\mathbb N}E_n.
\]
Since \([0,1]\) has positive measure, at least one set \(E_n\) has
positive measure. On this set, \(|A|\leq n\), so the previous argument
gives
\(
A(t)=\lambda t
\, (t\in\R).
\)\end{proof}}

{\begin{definition}\label{def:regular}
Let \(G\leq\Rp\), let \(F\) be a nonnegative solution of
\eqref{eq:rcl} on \(G\), and let \(\widetilde F\) be a nonnegative
extension of \(F\) to \(\Rp\) satisfying \eqref{eq:rcl}. We call
\(\widetilde F\) \emph{regular} if it is continuous, bounded on a
neighborhood of \(1\), Haar measurable, bounded on a measurable set of
positive Haar measure, or if
\(
t\longmapsto 1+\widetilde F(e^t)
\)
is nondecreasing on \([0,\infty)\).
\end{definition}

{
\begin{theorem}\label{thm:extension}
Let
\(
\alpha=(\alpha_p)_{p\in\mathcal P}
\in\prod_{p\in\mathcal P}\R,
\)
and let \(F_\alpha\) be the corresponding nonnegative solution on
\(\Qp\). The following are equivalent:
\begin{enumerate}[label=\textup{(\roman*)}]
\item \(F_\alpha\) is continuous at \(1\) on \(\Qp\);
\item \(F_\alpha\) is bounded on a neighborhood of \(1\) in \(\Qp\);
\item there is a \(\lambda\in\R\) such that
\(
\alpha_p=\lambda\log p,\,p\in\mathcal P;
\)
\item \(F_\alpha\) has a continuous nonnegative extension to \(\Rp\)
satisfying \eqref{eq:rcl};
\item \(F_\alpha\) has a nonnegative extension to \(\Rp\) satisfying
\eqref{eq:rcl} that is bounded on a neighborhood of \(1\);
\item \(F_\alpha\) has a Haar-measurable nonnegative extension to
\(\Rp\) satisfying \eqref{eq:rcl};
\item \(F_\alpha\) has a nonnegative extension to \(\Rp\) satisfying
\eqref{eq:rcl} that is bounded on a measurable set of positive Haar
measure;
\item \(F_\alpha\) has an extension \(\widetilde F\) to \(\Rp\)
satisfying \eqref{eq:rcl} such that
\(
t\longmapsto 1+\widetilde F(e^t)
\)
is nondecreasing on \([0,\infty)\). 
\end{enumerate}

 In condition \textup{(viii)}, nonnegativity is not assumed. Since
\(\widetilde F(1)=F_\alpha(1)=0\), Lemma~\ref{lem:identity-inverse} implies that
\(\widetilde F\) is reciprocal. Hence
\(t\mapsto1+\widetilde F(e^t)\) is even, and its monotonicity on
\([0,\infty)\) implies nonnegativity.

When these conditions hold, the unique regular nonnegative extension is
\begin{equation}\label{eq:Flambda}
F_\lambda(x)
=
\cosh(\lambda\log x)-1
=
\frac{x^\lambda+x^{-\lambda}}{2}-1.
\end{equation}
The family \(\alpha\) determines \(\lambda\), while the solution determines
only \(|\lambda|\), since
\[
F_\lambda=F_{-\lambda}.
\]
If the equivalent conditions are not valid, then \(F_\alpha\) is unbounded on every
nonempty open subset of \(\Qp\).
\end{theorem}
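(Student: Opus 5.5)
Everything will be transported to the additive side through the logarithm. Put \(D=L=\log\Qp\), which is a dense additive subgroup of \(\R\) by Lemma~\ref{lem:density}. Define \(f(t)=F_\alpha(e^t)=\cosh(a_\alpha(t))-1\) for \(t\in L\), where \(a_\alpha\) is the additive map on \(L\) fixed by \(a_\alpha(\log p)=\alpha_p\). Because \(\log:\Qp\to L\) is a homeomorphism sending \(1\) to \(0\), conditions (i) and (ii) say exactly that \(f\) is continuous at \(0\), respectively bounded on a neighborhood of \(0\). By Lemma~\ref{lem:cost-controls}, these are the same properties for \(a_\alpha\). By Lemma~\ref{lem:bounded-additive}, each of them holds if and only if \(a_\alpha(t)=\lambda t\) on \(L\). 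Evaluating at \(t=\log p\) turns this into (iii). Hence (i)\(\Leftrightarrow\)(ii)\(\Leftrightarrow\)(iii), and \(\lambda\) is unique. If these fail, Lemma~\ref{lem:blowup} makes \(f\) unbounded on every nonempty open subset of \(L\), and transporting back gives the final claim about \(F_\alpha\).

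Next, (iii) implies all of the extension conditions. Under (iii), \(F_\lambda\) from \eqref{eq:Flambda} satisfies \eqref{eq:rcl} by Proposition~\ref{prop:general-moduli} with the homomorphism \(\lambda\log\). It is continuous, nonnegative and locally bounded, and \(t\mapsto\cosh(\lambda t)\) is nondecreasing on \([0,\infty)\). Its restriction to \(\Qp\) is \(\cosh(\lambda\log q)-1=F_\alpha(q)\). So (iii) gives (iv) through (viii). The implications (iv)\(\Rightarrow\)(v)\(\Rightarrow\)(vii) and (iv)\(\Rightarrow\)(vi) are immediate, as is (v)\(\Rightarrow\)(ii) by restriction. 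For (vi)\(\Rightarrow\)(vii), write \(\Rp\) as the countable union of the sets \(\{\widetilde F\le n\}\); one of them has positive Haar measure.

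For the converse, take an extension \(\widetilde F\). By Proposition~\ref{prop:general-moduli} on \(G=\Rp\), it has the form \(\widetilde F(x)=\cosh(A(\log x))-1\) with \(A:\R\to\R\) additive, and \(A|_L=\pm a_\alpha\). Under (vii), \(\widetilde F\) is bounded on a set of positive Haar measure. Haar measure on \(\Rp\) is the pushforward of Lebesgue measure under \(\exp\), so \(|A|=\arcosh(1+\widetilde F\circ\exp)\) is bounded on a Lebesgue set of positive measure. Lemma~\ref{lem:measurable-additive} then gives \(A(t)=\lambda t\), and restricting to \(L\) gives (iii) after possibly replacing \(\lambda\) by \(-\lambda\).

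The main obstacle is (viii), because nonnegativity is not assumed there. The remark after the statement gives reciprocity and \(\widetilde F\ge0\): evenness together with monotonicity on \([0,\infty)\) makes the minimum \(\widetilde F(1)=0\). Then Proposition~\ref{prop:general-moduli} gives \(A\) as before, and \(g(t)=\cosh(A(t))\) is nondecreasing on \([0,\infty)\). Hence \(g\le g(1)\) on \([0,1]\), so \(|A|\) is bounded on \([0,1]\). Lemma~\ref{lem:measurable-additive}, or Lemma~\ref{lem:bounded-additive} applied via \(E=[0,1]\), then yields linearity.

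For uniqueness, any regular extension has \(A(t)=\mu t\) with \(\mu t=\pm\lambda t\) on \(L\). Lemma~\ref{lem:sign-rigidity} gives \(\mu=\pm\lambda\), and \(F_\lambda=F_{-\lambda}\) by evenness of \(\cosh\). For the bounded-near-\(1\) and continuous cases, use the same restriction argument on \(\R\) with Lemma~\ref{lem:bounded-additive} and \(D=\R\).
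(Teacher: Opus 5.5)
Your proposal is correct and follows essentially the same route as the paper. It uses logarithmic transport together with Lemmas~\ref{lem:cost-controls}, \ref{lem:bounded-additive} and \ref{lem:blowup} for (i)--(iii), the explicit $F_\lambda$ for (iii)$\Rightarrow$(iv)--(viii), and Proposition~\ref{prop:general-moduli} on $\Rp$ combined with Lemma~\ref{lem:measurable-additive} or the monotone bound on $|A|$ for the converses. The deviations are minor: you use Steinhaus with $E=[0,1]$ in the monotone case, whereas Lemma~\ref{lem:bounded-additive} would need the bound extended to $[-1,1]$ by evenness or oddness of $A$, as the paper does. You also track the sign $A|_L=\pm a_\alpha$ explicitly, which the paper leaves implicit; both deviations are harmless or slightly more careful.
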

}

By Corollary~\ref{cor:local-regularity} and the homeomorphism
\(\exp:L\to\Qp\), conditions \textup{(i)} and \textup{(ii)} are also
equivalent to continuity of \(F_\alpha\) at some point of \(\Qp\), and to
boundedness on some nonempty open subset of \(\Qp\).

\begin{proof}
We divide the proof into four steps.

\noindent \emph{Step 1.}
The map
\(
\exp:L\longrightarrow\Qp
\)
is a homeomorphism, where $L$ is given by \eqref{ll}. For \(t\in L\), we have
\[
F_\alpha(e^t)=\cosh(a_\alpha(t))-1.
\]
By Lemma~\ref{lem:density}, \(L\) is dense in \(\R\). Therefore
Lemmas~\ref{lem:cost-controls} and \ref{lem:bounded-additive} show that
\textup{(i)} and \textup{(ii)} hold if and only if
\(
a_\alpha(t)=\lambda t
\, (t\in L)
\)
for some \(\lambda\in\R\). Since
\[
a_\alpha(\log p)=\alpha_p,
\]
this is equivalent to
\[
\alpha_p=\lambda\log p
\qquad (p\in\mathcal P).
\]
Thus \textup{(i)}, \textup{(ii)}, and \textup{(iii)} are equivalent.
\smallskip

\noindent\emph{Step 2.}
Let us assume \textup{(iii)}. Define \(F_\lambda\) by
\eqref{eq:Flambda}. Then \(F_\lambda\) extends \(F_\alpha\). It is
continuous, Haar measurable, and bounded on every compact interval.
The function
\[
1+F_\lambda(e^t)=\cosh(\lambda t)
\]
is nondecreasing on \([0,\infty)\). Hence
\textup{(iv)}--\textup{(viii)} hold.
\smallskip

\noindent\emph{Step 3.}
Condition \textup{(iv)}, restricted to \(\Qp\), implies
\textup{(i)}. Condition \textup{(v)}, restricted to \(\Qp\), implies
\textup{(ii)}. Therefore both conditions imply \textup{(iii)} by
Step~1.

Let us assume \textup{(vi)}, and let \(\widetilde F\) be such an extension. Since \(\widetilde F\) is nonnegative and Haar
measurable,
\[
[1,2]
=
\bigcup_{n\in\mathbb N}
\{x\in[1,2]:\widetilde F(x)\leq n\}.
\]
Since \([1,2]\) has positive Haar measure, one of these measurable sets
has positive Haar measure. Hence \textup{(vii)} holds.

Let us assume \textup{(vii)}, and let \(\widetilde F\) be such an extension. By
Proposition~\ref{prop:general-moduli}, applied to \(\Rp\) and written
in logarithmic coordinates, there is an additive map
\(A:\R\to\R\) such that
\[
\widetilde F(e^t)=\cosh(A(t))-1.
\]
Suppose that
\(
\widetilde F(x)\leq M
\, (x\in E),
\)
where \(E\subset\Rp\) is measurable and has positive Haar measure. 
Since logarithmic coordinates identify multiplicative Haar measure
\(dx/x\) on \(\Rp\) with Lebesgue measure on \(\R\), the set
\(\log E\) is Lebesgue measurable and has positive measure. So, it holds
\[
|A(t)|
\leq
\arcosh(M+1)
\qquad (t\in\log E).
\]
By Lemma~\ref{lem:measurable-additive}, we have
\[
A(t)=\lambda t
\qquad (t\in\R).
\]
Restricting to \(L\), we obtain
\(
a_\alpha(t)=\lambda t,\, (t\in L),
\)
and hence \textup{(iii)}.

Finally, let us assume \textup{(viii)}. Since \(\widetilde F\) extends
\(F_\alpha\), we have
\[
\widetilde F(1)=F_\alpha(1)=0.
\]
Thus \(\widetilde F\) is normalized, and by
Lemma~\ref{lem:identity-inverse} it is reciprocal.

Let
\[
H(t)=1+\widetilde F(e^t).
\]
Reciprocity makes \(H\) even, and monotonicity on \([0,\infty)\) gives
\(
H(t)\geq1
,\,t\in\R.
\)
By Proposition~\ref{prop:general-moduli}, there is an additive map
\(A:\R\to\R\) such that
\[
H(t)=\cosh(A(t)).
\]
For \(0\leq t\leq1\), we have
\[
|A(t)|=\arcosh(H(t))
\leq\arcosh(H(1)).
\]
By evenness, the same holds for \(|t|\leq1\). Hence \(A\) is bounded
on a neighborhood of \(0\). Lemma~\ref{lem:bounded-additive} gives
\[
A(t)=\mu t
\qquad(t\in\R)
\]
for some \(\mu\in\R\). Restriction to \(L\) gives \textup{(iii)}. 
\smallskip

\noindent\emph{Step 4.}
Let \(\lambda\) satisfy \textup{(iii)}, and let \(\widetilde F\) be a
regular extension of \(F_\alpha\).

If \(\widetilde F\) is continuous, then it agrees with \(F_\lambda\) on
the dense subgroup \(\Qp\). Therefore
\[
\widetilde F=F_\lambda.
\]
Suppose that \(\widetilde F\) is bounded on a neighborhood of \(1\).
Write
\[
\widetilde F(e^t)=\cosh(A(t))-1
\]
with \(A:\R\to\R\) additive. By Lemma~\ref{lem:cost-controls}, \(A\)
is bounded on a neighborhood of \(0\). Lemma~\ref{lem:bounded-additive}
gives
\[
A(t)=\mu t
\qquad (t\in\R)
\]
for some \(\mu\in\R\). Hence
\(
\widetilde F=F_\mu.
\)

In the Haar-measurable case, condition \textup{(vi)} implies
\textup{(vii)}. In case \textup{(vii)},
Lemma~\ref{lem:measurable-additive} gives
\(
\widetilde F=F_\mu
\)
for some \(\mu\in\R\).

Since \(F_\mu\) and \(F_\lambda\) agree on \(\Qp\),
from Lemma~\ref{lem:sign-rigidity}, we get
\[
\mu=\lambda
\qquad\text{or}\qquad
\mu=-\lambda.
\]
Therefore
\[
F_\mu=F_\lambda.
\]
The monotone case follows similarly from
Theorem~\ref{thm:monotone-collapse}. Thus the regular extension is unique.

If the equivalent conditions are not valid, then \(a_\alpha\) is not linear on
\(L\). By Lemma~\ref{lem:blowup}, we have that
\[
t\longmapsto\cosh(a_\alpha(t))-1
\]
is unbounded on every nonempty open subset of \(L\). The homeomorphism
\(
\exp:L\longrightarrow\Qp
\)
then shows that \(F_\alpha\) is unbounded on every nonempty open subset
of \(\Qp\).
\end{proof}} 

\begin{remark}
If the equivalent conditions are not valid, then \(F_\alpha\) is continuous at no
point of \(\Qp\).
\end{remark}

%%%DOVDE

\begin{corollary}\label{cor:one-datum}
Let \(F:\Qp\to[0,\infty)\) satisfy \eqref{eq:rcl} and be continuous at
\(1\). Let \(q_0\in\Qp\), \(q_0\neq1\). If
\(
F(q_0)=\J(q_0),
\)
then
\[
F=\J
\qquad\text{on }\Qp.
\]
In particular, \(F(2)=1/4\) is sufficient.
\end{corollary}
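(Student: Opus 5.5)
By Theorem~\ref{thm:rational-moduli}, $F=F_\alpha$ for some $\alpha\in\prod_{p\in\mathcal P}\R$. Since $F$ is continuous at $1$, the equivalence (i)$\Leftrightarrow$(iii) of Theorem~\ref{thm:extension} gives a $\lambda\in\R$ with $\alpha_p=\lambda\log p$ for every prime $p$. Then
\[
F(q)=\cosh\Bigl(\lambda\sum_p v_p(q)\log p\Bigr)-1=\cosh(\lambda\log q)-1\qquad(q\in\Qp).
\]
So $F$ is the restriction of $F_\lambda$ to $\Qp$. Because $F_\lambda=F_{-\lambda}$, we may replace $\lambda$ by $|\lambda|$ and assume $\lambda\geq0$.

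Next we evaluate at $q_0$. Set $t_0=|\log q_0|$; this is positive because $q_0\neq1$. By evenness of $\cosh$, $F(q_0)=\cosh(\lambda t_0)-1$ and $\J(q_0)=\cosh(t_0)-1$. The hypothesis $F(q_0)=\J(q_0)$ therefore reads $\cosh(\lambda t_0)=\cosh(1\cdot t_0)$ with $\lambda,1\geq0$. Lemma~\ref{lem:single-value} then gives $\lambda=1$, and so $F(q)=\cosh(\log q)-1=\J(q)$ for all $q\in\Qp$.

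For the final remark, compute $\J(2)=(2+\tfrac12)/2-1=\tfrac14$ and take $q_0=2$.

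There is no real obstacle here. The only point needing care is the sign reduction to $\lambda\geq0$ before applying Lemma~\ref{lem:single-value}; it is justified by $F_\lambda=F_{-\lambda}$, or equivalently by Lemma~\ref{lem:sign-rigidity}.
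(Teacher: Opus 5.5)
Your proposal is correct and follows the paper's proof essentially step for step. Both go from Theorem~\ref{thm:rational-moduli} to Theorem~\ref{thm:extension}, then use $F_\lambda=F_{-\lambda}$ to reduce to $\lambda\geq0$, then apply Lemma~\ref{lem:single-value} at $t_0=|\log q_0|>0$, and finish with the computation $\J(2)=1/4$.
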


\begin{proof}
By Theorem~\ref{thm:rational-moduli}, \(F=F_\alpha\) for some
\(\alpha\). Since \(F\) is continuous at \(1\),
by Theorem~\ref{thm:extension}, we have
\[
F=F_\lambda
\]
for some \(\lambda\in\R\). Since \(F_\lambda=F_{-\lambda}\), replacing \(\lambda\) by
\(|\lambda|\), we assume that \(\lambda\geq0\). Let us set
\[
t_0=|\log q_0|>0.
\]
The equality \(F(q_0)=\J(q_0)\) gives
\[
\cosh(\lambda t_0)=\cosh(t_0).
\]
From Lemma~\ref{lem:single-value}, we have \(\lambda=1\). Hence
\(
F=F_1=\J
\)
on \(\Qp\). Finally,
\[
\J(2)=\frac{2+2^{-1}}{2}-1=\frac14.
\]
\end{proof}}

{ \begin{corollary}\label{cor:finite-data}
Let \(S\subset\Qp\) be finite, and let \(F_\alpha\) be a nonnegative
solution. There are \(2^{\aleph_0}\) pairwise distinct nonnegative
solutions \(F_\beta\) such that
\[
F_\beta(q)=F_\alpha(q)
\qquad (q\in S),
\]
and every \(F_\beta\) is unbounded on every nonempty open subset of
\(\Qp\).
\end{corollary}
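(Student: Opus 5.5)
Fix a prime \(r\) that does not divide the numerator or denominator of any element of the finite set \(S\). Such a prime exists because the union of the supports \(\supp(v(q))\), \(q\in S\), is finite. Then \(v_r(q)=0\) for every \(q\in S\), so \(a_\beta(q)\) does not depend on \(\beta_r\) for \(q\in S\). For each \(c\in\R\) we define \(\beta^{(c)}\) by \(\beta^{(c)}_p=\alpha_p\) for \(p\neq r\) and \(\beta^{(c)}_r=c\). By \eqref{eq:aq} we have \(a_{\beta^{(c)}}(q)=a_\alpha(q)\) for \(q\in S\), hence \(F_{\beta^{(c)}}(q)=F_\alpha(q)\) on \(S\).

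We then remove a small set of parameters \(c\). First, to ensure distinctness, Theorem~\ref{thm:rational-moduli} gives \(F_{\beta^{(c)}}=F_{\beta^{(c')}}\) only if \(\beta^{(c')}=\pm\beta^{(c)}\). If the sign is \(+\), then \(c=c'\). If the sign is \(-\), then \(c'=-c\) and \(\alpha_p=-\alpha_p\) for all \(p\neq r\), so \(\alpha_p=0\) for \(p\neq r\). Even in that degenerate case, each class \(\{c,-c\}\) has at most two members. So restricting to \(c\geq0\) yields pairwise distinct solutions, and there are still \(2^{\aleph_0}\) of them.

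Second, for unboundedness we apply Theorem~\ref{thm:extension}. \(F_{\beta^{(c)}}\) is bounded on some open set only if \(\beta^{(c)}_p=\lambda\log p\) for all \(p\) and some \(\lambda\). Fix any prime \(p_1\neq r\). Then necessarily \(\lambda=\alpha_{p_1}/\log p_1\), which is determined by \(\alpha\) alone, and this forces \(c=\lambda\log r\). Hence at most one value of \(c\) gives a regular solution. We exclude it, and by the closing assertion of Theorem~\ref{thm:extension} every remaining \(F_{\beta^{(c)}}\) is unbounded on every nonempty open subset of \(\Qp\).

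The set of admissible \(c\) is therefore \([0,\infty)\) minus at most one point, which has cardinality \(2^{\aleph_0}\). The cardinality is also at most \(2^{\aleph_0}\) trivially, since \(\prod_p\R\) has that cardinality. The only point requiring care is the sign identification \(\beta=-\alpha\) in the degenerate case, which the restriction \(c\geq0\) handles. The rest is routine bookkeeping with the direct sum decomposition \eqref{eq:q-direct-sum}.
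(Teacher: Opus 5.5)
Your proof is correct and follows essentially the same route as the paper. You free a prime coordinate $r$ not occurring in $S$, vary it over a continuum, discard the at most one value that lands on the logarithmic line, and handle the sign identification in the degenerate case $\alpha_p=0$ $(p\neq r)$ by restricting to one sign. The only difference is cosmetic: you restrict to $c\geq 0$ throughout, while the paper restricts to $t>0$ only in the degenerate case.
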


\begin{proof}
Only finitely many primes occur in the factorizations of the elements of \(S\).
Let us choose a prime \(r\) among the remaining ones, which is
possible because \(\mathcal P\) is infinite. For \(t\in\R\), define
\[
\beta^{(t)}_p=
\begin{cases}
t, & p=r,\\
\alpha_p, & p\neq r.
\end{cases}
\]
Then
\[
F_{\beta^{(t)}}(q)=F_\alpha(q)
\qquad (q\in S).
\]

At most one value of \(t\) gives
\(
\beta^{(t)}_p=\lambda\log p
, \,p\in\mathcal P
\)
for some \(\lambda\in\R\). After removing this value, there remain
\(2^{\aleph_0}\) parameters.  
If \((\alpha_p)_{p\neq r}\) is not identically zero, then
\(F_{\beta^{(t)}}=F_{\beta^{(t')}}\) implies \(t=t'\), since the global
sign alternative is impossible. If \(\alpha_p=0\) for every \(p\neq r\),
then \(F_{\beta^{(t)}}=F_{\beta^{(t')}}\) if and only if \(t'=\pm t\).
Restricting in this case to \(t>0\) still leaves \(2^{\aleph_0}\)
pairwise distinct solutions.
 By
Theorem~\ref{thm:extension}, every corresponding solution is unbounded
on every nonempty open subset of \(\Qp\).
\end{proof}

\begin{corollary}\label{cor:interleaving}
There is a sequence of nonnegative solutions \(F_n\) such that every
\(F_n\) is unbounded on every nonempty open subset of \(\Qp\), while
\[
F_n(q)\longrightarrow\J(q)
\qquad (q\in\Qp).
\]
\end{corollary}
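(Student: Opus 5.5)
We construct the sequence explicitly by perturbing the canonical weights at one prime at a time. Recall that $\J=F_{\alpha^{(0)}}$ with $\alpha^{(0)}_p=\log p$. Enumerate the primes as $p_1<p_2<\cdots$. For each $n$, let $\beta^{(n)}$ agree with $\log p$ at every prime except $p_n$, and set
\[
\beta^{(n)}_{p_n}=\log p_n+1 .
\]
Put $F_n=F_{\beta^{(n)}}$. By Theorem~\ref{thm:rational-moduli}, each $F_n$ is a nonnegative solution of \eqref{eq:rcl} on $\Qp$.

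\emph{Pointwise convergence.} Fix $q\in\Qp$. Only finitely many primes occur in the factorization of $q$, so there is $N$ with $v_{p_n}(q)=0$ for all $n>N$. For such $n$ we have $a_{\beta^{(n)}}(q)=a_{\alpha^{(0)}}(q)=\log q$. Hence $F_n(q)=\J(q)$ for all $n>N$, and the sequence is eventually constant at $\J(q)$. In particular $F_n(q)\to\J(q)$.

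\emph{Blow-up.} By Theorem~\ref{thm:extension}, it suffices to show that $\beta^{(n)}$ does not lie on the logarithmic line. Suppose $\beta^{(n)}_p=\lambda\log p$ for every prime $p$. Pick any prime $p\neq p_n$; then $\lambda\log p=\log p$, so $\lambda=1$. Evaluating at $p_n$ then gives $\log p_n+1=\log p_n$, a contradiction. Therefore the equivalent conditions of Theorem~\ref{thm:extension} fail, and $F_n$ is unbounded on every nonempty open subset of $\Qp$.

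No step here is a real obstacle. The only points to check are that the perturbed family is genuinely off the line, which needs at least one unperturbed prime to pin $\lambda=1$, and that the perturbation moves to infinity so that each fixed $q$ eventually stops seeing it.
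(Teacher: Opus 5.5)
Your proof is correct, but it arranges the perturbation differently from the paper. The paper fixes the prime $2$ and shrinks the perturbation. It takes $\alpha^{(n)}_2=\log 2+\tfrac1n$ and $\alpha^{(n)}_p=\log p$ otherwise, so that $a_{\alpha^{(n)}}(q)=\log q+\tfrac1n v_2(q)\to\log q$, and convergence follows from continuity of $\cosh$. You instead keep the perturbation size fixed and push it out along the primes, so that $F_n(q)=\J(q)$ for all large $n$.

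Your route gives a stronger conclusion: each sequence $F_n(q)$ is eventually constant. In fact $F_n$ agrees with $\J$ on the dense subgroup generated by the primes other than $p_n$, just as the twist $F^{(p_0)}$ does in Corollary~\ref{cor:dense-agreement}. You could even take $F_n=F^{(p_n)}$. The price is that your route genuinely needs infinitely many primes.

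The paper's route needs only one perturbed coordinate and one unperturbed one. It therefore adapts directly to any carrier $G\leq\Rp$ of rational rank $r\geq2$ in Theorem~\ref{thm:rank}, including finite rank, where a moving coordinate is unavailable. It also makes the weights converge uniformly, not just coordinatewise.

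The blow-up step is the same in both arguments. An unperturbed prime pins $\lambda=1$, the perturbed coordinate contradicts this, and Theorem~\ref{thm:extension} gives unboundedness on every nonempty open subset of $\Qp$.
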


\begin{proof}
Let
\[
\ell_p=\log p,
\qquad
\beta_2=1,
\qquad
\beta_p=0 \quad (p\neq2),
\]
and define
\[
\alpha^{(n)}=\ell+\frac1n\beta,
\qquad
F_n=F_{\alpha^{(n)}}.
\]
The family \(\alpha^{(n)}\) is not proportional to \(\ell\). Indeed, the
coordinates \(p\neq2\) give the proportionality factor \(1\), while
\[
\alpha^{(n)}_2=\log2+\frac1n\neq\log2.
\]
Hence, by Theorem~\ref{thm:extension}, each \(F_n\) is unbounded on every
nonempty open subset of \(\Qp\).

For fixed \(q\in\Qp\), we have
\[
a_{\alpha^{(n)}}(q)
=
\log q+\frac1n v_2(q)
\longrightarrow
\log q.
\]
Therefore
\[
F_n(q)
\longrightarrow
\cosh(\log q)-1
=
\J(q).
\]
\end{proof}}

\begin{corollary}\label{cor:dense-agreement}
For every prime \(p_0\), the cost \(F^{(p_0)}\) agrees with \(\J\) on a
dense subgroup of \(\Qp\), but is unbounded on every nonempty open subset
of \(\Qp\).
\end{corollary}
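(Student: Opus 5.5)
The plan is to exhibit an explicit dense subgroup on which the twisted cost $F^{(p_0)}$ coincides with $\J$, and then to read off the blow-up from Theorem~\ref{thm:extension}.

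For the subgroup, take
\[
G_{p_0}=\{q\in\Qp: v_{p_0}(q)=0\},
\]
the subgroup generated by all primes $p\neq p_0$. For $q\in G_{p_0}$ the $p_0$-coordinate never enters the sum, so
\[
a_{\alpha^{(p_0)}}(q)=\sum_{p\neq p_0}v_p(q)\log p=\log q .
\]
Consequently $F^{(p_0)}(q)=\cosh(\log q)-1=\J(q)$ for every $q\in G_{p_0}$.

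To see that $G_{p_0}$ is dense, choose two distinct primes $r,s$ both different from $p_0$; this is possible since $\mathcal P$ is infinite. By Lemma~\ref{lem:density}, the subgroup generated by $r$ and $s$ is dense in $\Rp$. This subgroup lies in $G_{p_0}$, so $G_{p_0}$ is dense in $\Rp$, and in particular dense in $\Qp$.

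It remains to show that $F^{(p_0)}$ is unbounded on every nonempty open subset. By Theorem~\ref{thm:extension}, it suffices to check that $\alpha^{(p_0)}$ is not of the form $\lambda\log p$ for all primes $p$. Suppose it were. Any prime $p\neq p_0$ gives $\log p=\lambda\log p$, so $\lambda=1$. The coordinate at $p_0$ would then require $-\log p_0=\log p_0$, which is false because $\log p_0>0$. Hence condition (iii) of Theorem~\ref{thm:extension} fails, and the final clause of that theorem gives unboundedness on every nonempty open subset of $\Qp$.

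There is no real obstacle here. The only point needing care is to select primes other than $p_0$ in the density step, so that Lemma~\ref{lem:density} applies inside $G_{p_0}$.
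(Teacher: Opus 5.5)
Your proof is correct and follows essentially the same argument as the paper. You use the same subgroup generated by the primes $p\neq p_0$, obtain density from Lemma~\ref{lem:density} applied to two primes $r,s\neq p_0$, and get unboundedness from Theorem~\ref{thm:extension}; your explicit check that $\lambda$ is forced to equal $1$ and then contradicts the $p_0$-coordinate simply spells out the non-proportionality that the paper asserts in one line.
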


{\begin{proof}
Let \(K\leq\Qp\) be the subgroup generated by the primes \(p\neq p_0\).
By \eqref{eq:axis-twist}, we have
\[
F^{(p_0)}(q)=\J(q)\qquad (q\in K).
\]
Let us choose distinct primes \(r,s\neq p_0\). By Lemma~\ref{lem:density},
the subgroup generated by \(r\) and \(s\) is dense in \(\Rp\). Hence \(K\) is
dense in \(\Qp\).

The weights of \(F^{(p_0)}\) are not proportional to \((\log p)_{p\in\mathcal P}\).
Therefore Theorem~\ref{thm:extension} shows that \(F^{(p_0)}\) is unbounded on
every nonempty open subset of \(\Qp\).
\end{proof}}

\subsection{The regular locus}

Let us denote
\[
\mathcal A=\prod_{p\in\mathcal P}\R
\]
and consider it with the product topology. Let us also denote 
\(
\ell=(\log p)_{p\in\mathcal P}
\)
and define
\[
\mathcal D=\{\lambda\ell:\lambda\in\R\}\subset\mathcal A.
\]

\begin{proposition}\label{prop:thin}
The set \(\mathcal D\) is closed and nowhere dense in \(\mathcal A\).
Its image in \(\mathcal A/\{\pm1\}\) is also closed and nowhere dense.
After quotienting by sign, the regular locus is parametrized by
\(\lambda\geq0\).
\end{proposition}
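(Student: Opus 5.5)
The plan is to show that \(\mathcal D\) is closed, then that it has empty interior, then to pass to the sign quotient, and finally to read off the parametrization.

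\emph{Closedness of \(\mathcal D\).} I would write \(\mathcal D\) as an intersection of closed sets. Since \(\log 2\neq0\), a family \(\alpha\in\mathcal A\) lies in \(\mathcal D\) if and only if
\[
\alpha_p\log 2-\alpha_2\log p=0
\qquad(p\in\mathcal P).
\]
Indeed, if these equations hold, we may take \(\lambda=\alpha_2/\log2\). Each map \(\alpha\mapsto\alpha_p\log2-\alpha_2\log p\) depends on only two coordinates, so it is continuous in the product topology. Hence every zero set is closed, and so is their intersection \(\mathcal D\).

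\emph{Empty interior.} A basic open set \(U\) constrains only finitely many coordinates, say those in a finite set \(S\), and we may enlarge \(S\) so that \(2\in S\). Suppose \(\lambda\ell\in U\). Pick a prime \(r\notin S\) and change only the coordinate \(\alpha_r\) to \(\lambda\log r+1\). The modified family still lies in \(U\). It is not in \(\mathcal D\): proportionality to \(\ell\) is forced to have factor \(\lambda\) by the coordinate at \(2\), but the coordinate at \(r\) then fails. Thus \(U\not\subset\mathcal D\), and a closed set with empty interior is nowhere dense.

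\emph{The quotient.} Let \(\pi:\mathcal A\to\mathcal A/\{\pm1\}\) be the quotient map, which is open and continuous. The set \(\mathcal D\) is symmetric, so \(\pi^{-1}(\pi(\mathcal D))=\mathcal D\) is closed. Hence \(\pi(\mathcal D)\) is closed in the quotient topology. For nowhere density, suppose a nonempty open \(W\subset\pi(\mathcal D)\). Then \(\pi^{-1}(W)\) is a nonempty open subset of \(\mathcal D\), which contradicts the previous step.

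\emph{Parametrization.} The identification \(\lambda\ell\sim-\lambda\ell\) leaves exactly one representative with \(\lambda\geq0\). The map \(\lambda\mapsto\lambda\ell\) is injective because \(\ell\neq0\). By Theorem~\ref{thm:extension} this sign-quotiented line is precisely the regular locus.

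There is no real obstacle here. The only point that needs care is the quotient argument, which uses that \(\pi\) is open and that \(\mathcal D\) is saturated under the sign action.
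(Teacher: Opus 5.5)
Your proof is correct and follows essentially the same route as the paper: you write $\mathcal D$ as the intersection of the closed sets $\{\alpha:\alpha_p\log2=\alpha_2\log p\}$, you get empty interior by perturbing an unrestricted coordinate $\alpha_r$ with $r\neq2$, and you show the image has empty interior by pulling an open subset of $\pi(\mathcal D)$ back into $\mathcal D$. The only cosmetic difference is that you get closedness of $\pi(\mathcal D)$ from saturation, $\pi^{-1}(\pi(\mathcal D))=\mathcal D$, whereas the paper uses that $\pi$ is a closed map because the sign action is finite; these amount to the same thing.
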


\begin{proof}
Let us fix the prime \(2\). We have
\[
\mathcal D=
\bigcap_{p\in\mathcal P}
\left\{
\alpha\in\mathcal A:
\alpha_p\log 2=\alpha_2\log p
\right\}.
\]
Each set in this intersection is closed, so \(\mathcal D\) is closed.

{Let \(U\) be a nonempty basic open set in the product topology, and choose
\(\alpha\in U\). The set \(U\) restricts only finitely many coordinates.
Let us choose an unrestricted prime \(r\neq2\), and change the coordinate
\(\alpha_r\) so that
\[
\alpha_r\log 2\neq\alpha_2\log r.
\]
The resulting point belongs to \(U\), but not to \(\mathcal D\).} Thus
\(\mathcal D\) has empty interior and is nowhere dense.
The sign action preserves \(\mathcal D\). Let
\[
\pi:\mathcal A\longrightarrow\mathcal A/\{\pm1\}
\]
be the quotient map. Since the action is finite, \(\pi\) is closed.
Suppose that \(\pi(\mathcal D)\) contains a nonempty open set \(U\).
Then
\[
\pi^{-1}(U)\subset\mathcal D
\]
is nonempty and open, which is impossible. Therefore
\(\pi(\mathcal D)\) is closed and nowhere dense.

Finally, since \(F_\lambda=F_{-\lambda}\), the regular locus is
parametrized by \(\lambda\in[0,\infty)\).
\end{proof}

 %%%%%%%%%%%%%%%%%%%
\subsection{General carriers and rational rank}

Let \(G\leq\Rp\) be a nontrivial subgroup. Let us denote
\[
D=\log G,
\qquad
V=\operatorname{span}_{\mathbb Q}D,
\]
and let
\(
r=\dim_{\mathbb Q}V
\)
be the rational rank of \(G\).

By Proposition~\ref{prop:general-moduli}, we have
\[
\Sol_+(G)
\cong
\Hom_{\mathbb Q}(V,\R)\big/\{\pm1\}
\cong
\R^B\big/\{\pm1\},
\]
where \(B\) is a \(\mathbb Q\)-basis of \(V\).

Let us denote by
\(
\iota:V\longrightarrow\R
\) 
the inclusion map.

{
\begin{theorem}\label{thm:rank}
\begin{enumerate}[label=\textup{(\roman*)},leftmargin=2.2em]
\item if \(r=1\), every nonnegative solution on \(G\) is the restriction of
some \(F_\lambda\), and has a regular extension;
\item if \(r\geq2\), then \(D\) is dense in \(\R\), and a nonnegative solution
has a regular extension if and only if its corresponding additive character is \(\lambda\iota\).
Otherwise, the solution is unbounded on every nonempty open subset of \(G\).
The regular locus is the line \(\R\iota\), which is closed and nowhere dense
in \(\R^B\).
\end{enumerate}
\end{theorem}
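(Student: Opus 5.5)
The plan is to work in logarithmic coordinates throughout. By Proposition~\ref{prop:general-moduli} applied to \(G\), every nonnegative solution is \(F(x)=\cosh(a(\log x))-1\) for an additive \(a:D\to\R\). Such an \(a\) extends uniquely by rational linearity to a \(\mathbb Q\)-linear map on \(V\), so \(\Hom(D,\R)=\Hom_{\mathbb Q}(V,\R)\cong\R^B\). The proof then splits on the value of \(r\).

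\emph{Case \(r=1\).} Here \(V=\mathbb Q t_0\) for some \(t_0\in D\setminus\{0\}\). Every \(\mathbb Q\)-linear \(a:V\to\R\) is determined by \(a(t_0)\), so it equals \(\lambda\iota\) with \(\lambda=a(t_0)/t_0\). Hence \(F(x)=\cosh(\lambda\log x)-1=F_\lambda(x)\) on \(G\), and \(F_\lambda\) is a continuous nonnegative solution on \(\Rp\), which gives a regular extension. No density is needed in this case; \(D\) may even be cyclic.

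\emph{Case \(r\ge2\), density.} Pick \(t_1,t_2\in D\) that are linearly independent over \(\mathbb Q\). Then \(t_1/t_2\notin\mathbb Q\), and the classical Kronecker fact shows that \(\Z t_1+\Z t_2\) is dense in \(\R\), exactly as in the proof of Lemma~\ref{lem:density}. So \(D\) is dense.

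\emph{Case \(r\ge2\), the alternative.} I would run the argument of Theorem~\ref{thm:extension} with \(L\) replaced by \(D\), using the homeomorphism \(\exp:D\to G\). If \(a=\lambda\iota\) on \(D\), then \(F_\lambda\) is a regular extension. Conversely, suppose some regular extension \(\widetilde F\) exists. The continuous and locally bounded cases restrict to \(G\) and give boundedness near \(1\); Lemmas~\ref{lem:cost-controls} and~\ref{lem:bounded-additive} then give linearity of \(a\). In the Haar-measurable, positive-measure-bounded, and monotone cases, write \(\widetilde F(e^t)=\cosh(A(t))-1\) with \(A\) additive on \(\R\). Lemma~\ref{lem:measurable-additive}, or the monotone argument of Step~3 of Theorem~\ref{thm:extension}, gives \(A=\mu t\), and restricting to \(D\) gives \(a=\mu\iota\). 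If \(a\) is not of the form \(\lambda\iota\), Lemma~\ref{lem:blowup} on the dense subgroup \(D\) gives unboundedness on every nonempty open subset of \(D\), hence of \(G\).

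\emph{Case \(r\ge2\), the regular locus.} In coordinates \(\R^B\), the line \(\R\iota\) is \(\{(\lambda b)_{b\in B}\}\). Fix \(b_0\in B\). The line is the intersection over \(b\in B\) of the closed sets \(\{c: c_b b_0=c_{b_0} b\}\), so it is closed. For nowhere density, take a basic open set \(U\) and a point of \(U\). If \(B\) is infinite, \(U\) leaves some coordinate \(b\neq b_0\) unrestricted; changing that coordinate breaks proportionality while staying in \(U\). If \(B\) is finite, then \(\R^B\cong\R^r\) with \(r\ge2\), and a line has empty interior. This is the same argument as Proposition~\ref{prop:thin}.

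The main obstacle is mostly bookkeeping. One must make sure that the regular-extension equivalences of Theorem~\ref{thm:extension} use only density of \(D\) and no arithmetic of primes, and that the finite-\(B\) case of nowhere density is handled separately.
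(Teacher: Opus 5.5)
Your proposal is correct and follows essentially the same route as the paper: the rank-one case via \(V=\mathbb Q u\), density for \(r\geq2\), rerunning Theorem~\ref{thm:extension} with \(L\) replaced by \(D\), and the closed/nowhere-dense argument for \(\R\iota\) split into infinite and finite \(B\). The only cosmetic difference is that you obtain density from two \(\mathbb Q\)-independent elements of \(D\) via Kronecker, whereas the paper invokes the cyclic-or-dense dichotomy for subgroups of \(\R\).
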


\begin{proof}
Suppose first that \(r=1\). Then
\(
V=\mathbb Q u
\)
for some \(u\neq0\). Every \(\mathbb Q\)-linear map \(a:V\to\R\) has the form
\[
a(t)=\lambda t,
\qquad
\lambda=\frac{a(u)}{u}.
\]
This proves \textup{(i)}.

Suppose now that \(r\geq2\). Then \(D\) is not cyclic. Since every subgroup of
\(\R\) is either cyclic or dense, \(D\) is dense in \(\R\). The argument of
Theorem~\ref{thm:extension} applies with \(L\) replaced by \(D\), since the
regularity lemmas used there require only that \(D\) be a dense additive
subgroup of \(\R\). A regular extension exists exactly when
\[
a=\lambda\iota.
\]
Otherwise, Lemma~\ref{lem:blowup} shows that the solution is unbounded
on every nonempty open subset of \(G\). If \(B\) is infinite, the argument of Proposition~\ref{prop:thin} applies
directly. If \(B\) is finite, fix \(b_0\in B\). The locus \(\R\iota\) is
determined by 
\[
\alpha_b b_0=\alpha_{b_0}b
\qquad (b\in B).
\]
These relations are equivalent to
\(\alpha_b=\lambda b\) for all \(b\in B\), with
\(\lambda=\alpha_{b_0}/b_0\). Hence it is closed. Since \(r\geq2\), every nonempty basic open set contains a sufficiently small
perturbation of one coordinate which leaves this locus. Thus \(\R\iota\) has
empty interior and is nowhere dense.
\end{proof}
}
{
\begin{example}\label{ex:rank-one}
Both \(\Qp\) and \(\exp(\mathbb Q)\) are dense in \(\Rp\), but their rational
ranks are \(\aleph_0\) and \(1\), respectively. Therefore every nonnegative
solution on \(\exp(\mathbb Q)\) has a regular extension, while the regular
locus on \(\Qp\) is nowhere dense. Thus density alone does not determine
regularity.
\end{example}
}

\section{The real carrier}
\label{sec:real}

In this section we work on \(\Rp\). We first recover the one-parameter family
\(F_\lambda\) directly, by continuity and by monotonicity. We then study its
automorphisms and the calibration. \smallskip

{
We use the following classical result (see
Acz\'el~\cite{Aczel1966} and Kuczma~\cite{Kuczma2009}).

\begin{theorem}
\label{thm:continuous-trichotomy}
Let \(H:\R\to\R\) be continuous and satisfy
\[
H(t+s)+H(t-s)=2H(t)H(s),
\qquad
H(0)=1.
\]
Then exactly one of the following holds:
\[
H\equiv1,
\qquad
H(t)=\cosh(\lambda t)\quad(\lambda>0),
\qquad
H(t)=\cos(\tau t)\quad(\tau>0).
\]
In the last two cases the parameter is unique.
\end{theorem}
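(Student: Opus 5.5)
Although the statement is quoted as classical, it can be derived from the machinery already in place; the plan is to rebuild it from Theorem~\ref{thm:dalembert-abelian} together with the regularity lemmas of Section~\ref{sec:extension}. First apply Theorem~\ref{thm:dalembert-abelian} to \(G=(\R,+)\). Since \(H(0)=1\), \(H\not\equiv0\), so \(H=(\chi+\chi^{-1})/2\) for a homomorphism \(\chi:\R\to\C^\times\). Because \(H\) is real-valued, the argument of Proposition~\ref{prop:real-moduli} (the two subgroups \(\chi^{-1}(\R^\times)\) and \(\chi^{-1}(S^1)\), combined with Lemma~\ref{lem:two-subgroups}) puts us in one of two cases. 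Either \(H=\varepsilon\cosh a\) with \(\varepsilon:\R\to\{\pm1\}\) and \(a:\R\to\R\) additive, or \(H=\cos\theta\) with \(\theta:\R\to\R/2\pi\Z\) a homomorphism.

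In the first case, \(\varepsilon\) is a homomorphism into \(\{\pm1\}\), and \(\R\) is divisible, so \(\varepsilon(t)=\varepsilon(t/2)^2=1\). Hence \(H=\cosh a\ge1\). Continuity of \(H\) then gives continuity of \(|a|=\arcosh H\) at \(0\), and therefore continuity of \(a\) at \(0\) (Lemma~\ref{lem:cost-controls}). Lemma~\ref{lem:bounded-additive} with \(D=\R\) gives \(a(t)=\lambda t\). Replacing \(\lambda\) by \(|\lambda|\) we get either \(H\equiv1\) when \(\lambda=0\), or \(H=\cosh(\lambda t)\) with \(\lambda>0\).

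The second case is the main obstacle, because a character into \(S^1\) has no logarithm a priori. Here I would use continuity of \(H\) to make \(\chi=e^{i\theta}\) continuous. Near \(t=0\) we have \(H(t)=\cos\theta(t)\) close to \(1\), so \(\theta(t)\) lies near \(0\) modulo \(\pm\). To remove the sign ambiguity, use the identity \(\chi(t)=H(t)+i\sin\theta(t)\) and show that \(\sin\theta\) is continuous near \(0\). For this, write \(\sin\theta(s)\sin\theta(t)=H(s)H(t)-H(s+t)\), and choose \(t_0\) small with \(\sin\theta(t_0)\ne0\); if no such \(t_0\) exists, \(H\equiv1\) near \(0\) and hence everywhere by doubling. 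This exhibits \(\sin\theta(s)\) as a continuous function of \(s\). So \(\chi\) is a continuous homomorphism \(\R\to S^1\). Lifting through the covering \(\R\to S^1\) gives \(\chi(t)=e^{i\tau t}\), hence \(H=\cos(\tau t)\) with \(\tau\ge0\), after a sign change if needed. The value \(\tau=0\) is absorbed into \(H\equiv1\).

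Finally, exclusivity and uniqueness. The cases are mutually exclusive: \(\cosh(\lambda t)\) is unbounded, \(\cos(\tau t)\) is bounded and nonconstant, and \(H\equiv1\) is constant. Uniqueness of \(\lambda>0\) follows from Lemma~\ref{lem:single-value} at \(t_0=1\). Uniqueness of \(\tau>0\) follows because the smallest positive zero of \(\cos(\tau t)\) is \(\pi/(2\tau)\), which determines \(\tau\).
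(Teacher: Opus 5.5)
Your proof is correct, but it takes a different route from the one behind the paper's statement. The paper gives no argument here and cites Acz\'el and Kuczma, where the continuous case is treated directly on \(H\). One classical version is Cauchy's method: continuity gives \(H>0\) on some \([0,c]\); one writes \(H(c)=\cos(\gamma c)\) or \(\cosh(\gamma c)\); this is propagated to all points \(mc/2^n\), \(m\in\Z\), via the halving formula \(H(t/2)^2=(1+H(t))/2\) and the recursion \(H((n+1)t)=2H(t)H(nt)-H((n-1)t)\); density then finishes the argument. Another version integrates the equation to obtain smoothness and the ODE \(H''=H''(0)H\).

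You instead start from the algebraic representation of Theorem~\ref{thm:dalembert-abelian} and the real dichotomy of Proposition~\ref{prop:real-moduli}. Continuity then only has to be transferred to the character. On the hyperbolic branch this is exactly Lemmas~\ref{lem:cost-controls} and~\ref{lem:bounded-additive}. On the trigonometric branch it is your identity \(\sin\theta(s)\sin\theta(t_0)=H(s)H(t_0)-H(s+t_0)\), followed by the classification of continuous characters \(\R\to S^1\).

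The classical route is elementary and self-contained, with no appeal to the structure theory of d'Alembert's equation on abstract groups. Yours depends on the Kannappan--Stetk{\ae}r theorem and a lifting argument, but it fits the paper's framework and shows exactly where continuity is needed. On the branch \(H\geq1\), local boundedness already suffices. On the cosine branch, boundedness is automatic, and any discontinuous additive \(A\) yields the bounded discontinuous solution \(\cos A\). So your sine identity is the one step where continuity cannot be weakened to boundedness.

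A small simplification: \(t_0\) need not be small. Once \(\sin\theta(t_0)\neq0\), your formula for \(\sin\theta(s)\) holds for every \(s\in\R\). If \(\sin\theta\equiv0\), then \(\chi\) is \(\{\pm1\}\)-valued, and your divisibility argument for \(\varepsilon\) gives \(\chi\equiv1\).
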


\begin{proposition}
\label{prop:continuous-collapse}
Let \(F:\Rp\to\R\) be normalized and continuous, and suppose that it satisfies
\eqref{eq:rcl}. Let us define
\(
G(t)=F(e^t).
\)
Suppose that
\[
\lim_{s\to0}\frac{G(s)}{s^2}=c
\]
for some \(c>0\). Then
\(
F=F_{\sqrt{2c}}.
\)
\end{proposition}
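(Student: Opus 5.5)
The plan is to pass to logarithmic coordinates and apply the continuous trichotomy, Theorem~\ref{thm:continuous-trichotomy}. Set \(H(t)=1+G(t)=1+F(e^t)\). Since \(F\) is normalized, \(H(0)=1\). By Lemma~\ref{lem:affine}, \(H\) satisfies d'Alembert's equation \(H(t+s)+H(t-s)=2H(t)H(s)\) on \(\R\). It is continuous because \(F\) and \(\exp\) are continuous. Theorem~\ref{thm:continuous-trichotomy} then leaves three cases: \(H\equiv1\), \(H(t)=\cosh(\lambda t)\) with \(\lambda>0\), or \(H(t)=\cos(\tau t)\) with \(\tau>0\).

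Next I use the hypothesis \(G(s)/s^2\to c>0\) to discard two of the cases and determine the parameter.
\begin{itemize}
\item If \(H\equiv1\), then \(G\equiv0\) and the limit is \(0\), contradicting \(c>0\).
\item If \(H(t)=\cos(\tau t)\), then \(G(s)=\cos(\tau s)-1\le0\). Hence \(G(s)/s^2\) is nonpositive and its limit is \(-\tau^2/2<0\). This is again a contradiction.
\item If \(H(t)=\cosh(\lambda t)\), the Taylor expansion \(\cosh(\lambda s)-1=\lambda^2s^2/2+O(s^4)\) gives \(G(s)/s^2\to\lambda^2/2\). So \(\lambda^2=2c\), and with \(\lambda>0\) this forces \(\lambda=\sqrt{2c}\).
\end{itemize}

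Finally, I translate back: \(F(x)=G(\log x)=\cosh(\sqrt{2c}\log x)-1=F_{\sqrt{2c}}(x)\) for all \(x\in\Rp\).

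There is no real obstacle here. The only point needing care is excluding the cosine branch, and that follows from the sign of the limit. Uniqueness of \(\lambda\) within the \(\cosh\) branch comes from the limit itself, or alternatively from the uniqueness clause of the trichotomy.
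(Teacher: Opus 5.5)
Your proposal is correct and follows the paper's proof essentially verbatim. Both pass to \(H(t)=1+F(e^t)\) and invoke Theorem~\ref{thm:continuous-trichotomy}. Both then use \(c>0\) to exclude the flat and trigonometric branches and read off \(\lambda=\sqrt{2c}\) from \(\lambda^2/2=c\) in the hyperbolic branch.
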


\begin{proof}
Let us define
\(
H=1+G.
\)
Then
\[
H(t+s)+H(t-s)=2H(t)H(s),
\qquad
H(0)=1.
\]
By Theorem~\ref{thm:continuous-trichotomy}, there are three possible
branches. For the flat branch, we have
\[
\lim_{s\to0}\frac{G(s)}{s^2}=0.
\]
For the hyperbolic branch,
\[
\lim_{s\to0}\frac{G(s)}{s^2}=\frac{\lambda^2}{2},
\]
while for the trigonometric branch,
\[
\lim_{s\to0}\frac{G(s)}{s^2}=-\frac{\tau^2}{2}.
\] Since \(c>0\), only the hyperbolic branch is possible.
Therefore
\(
F=F_\lambda
\)
for some \(\lambda>0\), and
\(
c=\frac{\lambda^2}{2}
\)
gives uniqueness.
\end{proof}}

The next theorem replaces continuity by an order condition, and it is  used in condition \textup{(viii)} of Theorem~\ref{thm:extension}.
 
{ 
\begin{theorem}\label{thm:monotone-collapse}
Let \(F:\Rp\to\R\) be normalized and satisfy \eqref{eq:rcl}. Let us define
\(
H_F(t)=1+F(e^t).
\)
If \(H_F\) is nondecreasing on \([0,\infty)\), then
\(
F=F_\lambda
\)
for some \(\lambda\in\R\). If \(F\) is nonflat, there is a unique
representative \(\lambda>0\).
\end{theorem}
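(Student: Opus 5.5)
Since \(F\) is normalized, Lemma~\ref{lem:identity-inverse} makes \(F\) reciprocal, so \(H_F\) is even on \(\R\). Monotonicity on \([0,\infty)\) then gives \(H_F(t)\ge H_F(0)=1\) for all \(t\in\R\), that is, \(F\geq0\). So \(F\in\Sol_+(\Rp)\) even though nonnegativity was not assumed. I will apply Proposition~\ref{prop:general-moduli} on \(\Rp\), written in logarithmic coordinates. This gives an additive map \(A:\R\to\R\) with
\[
H_F(t)=\cosh(A(t))\qquad(t\in\R).
\]

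The key step is local boundedness of \(A\). For \(0\le t\le 1\), monotonicity gives \(1\le H_F(t)\le H_F(1)\), so \(|A(t)|=\arcosh(H_F(t))\le\arcosh(H_F(1))\). Evenness of \(H_F\) extends this bound to \(|t|\le1\). Hence \(A\) is bounded on a neighborhood of \(0\) in the dense subgroup \(D=\R\). Lemma~\ref{lem:bounded-additive} then yields \(A(t)=\lambda t\) for a unique \(\lambda\in\R\), and so \(F(x)=\cosh(\lambda\log x)-1=F_\lambda(x)\).

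For the representative, suppose \(F\) is nonflat. Then \(\lambda\neq0\), because \(F_0\equiv0\). Since \(F_\lambda=F_{-\lambda}\), I may take \(\lambda>0\). For uniqueness, if \(F_\lambda=F_\mu\) with \(\lambda,\mu>0\), evaluating at \(x=e\) gives \(\cosh\lambda=\cosh\mu\), and Lemma~\ref{lem:single-value} with \(t_0=1\) gives \(\lambda=\mu\). Equivalently, Lemma~\ref{lem:sign-rigidity} gives \(\mu=\pm\lambda\), and positivity selects \(\mu=\lambda\).

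I expect no real obstacle. The only subtle point is the first step: nonnegativity has to be derived from evenness and monotonicity before Proposition~\ref{prop:general-moduli} applies, since that proposition is stated only for nonnegative solutions. The alternative would be Proposition~\ref{prop:real-moduli}, which would require excluding the sign and cosine branches. Establishing \(H_F\ge1\) first avoids that case analysis entirely.
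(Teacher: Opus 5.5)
Your proposal is correct and follows essentially the same route as the paper: reciprocity from Lemma~\ref{lem:identity-inverse} plus monotonicity gives \(H_F\ge1\), Proposition~\ref{prop:general-moduli} gives \(H_F=\cosh\circ A\), the bound \(|A(t)|\le\arcosh(H_F(1))\) on \([-1,1]\) makes \(A\) locally bounded, and Lemma~\ref{lem:bounded-additive} yields \(A(t)=\lambda t\). Your uniqueness argument for the representative \(\lambda>0\), via Lemma~\ref{lem:single-value}, simply spells out what the paper states in one line from \(F_\lambda=F_{-\lambda}\).
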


\begin{proof}
By Lemma~\ref{lem:identity-inverse}, \(F\) is reciprocal. Therefore \(H_F\)
is even, and \(H_F(0)=1\). Since \(H_F\) is nondecreasing on
\([0,\infty)\), we have
\[
H_F(t)\geq1
\qquad(t\in\R).
\]
By Proposition~\ref{prop:general-moduli}, there is an additive map
\(A:\R\to\R\) such that
\[
H_F(t)=\cosh(A(t)).
\]
For \(0\leq t\leq1\),
\[
|A(t)|
=
\arcosh(H_F(t))
\leq
\arcosh(H_F(1)).
\]
Since \(H_F\) is even, the same bound holds for \(|t|\leq1\). Thus \(A\) is bounded
on a neighborhood of \(0\). By Lemma~\ref{lem:bounded-additive}, we have
\[
A(t)=\lambda t
\]
for some \(\lambda\in\R\). Hence
\[
F(x)=F_\lambda(x).
\]
Since \(F_\lambda=F_{-\lambda}\), a nonflat solution has a unique
representative \(\lambda>0\).
\end{proof}
}

 %%%%%%%
{
\subsection{Automorphisms and calibration}\label{sec:calibration}

For \(a\neq0\), let us define
\(
\sigma_a:\Rp\to\Rp,
\) by\[
\sigma_a(x)=x^a.
\]
Then
\begin{equation}\label{eq:gauge-action}
F_\lambda\circ\sigma_a
=
F_{a\lambda}
=
F_{|a|\lambda}.
\end{equation}

\begin{proposition}\label{prop:effective-action}
Every continuous multiplicative automorphism of \(\Rp\) has the form
\(\sigma_a\) for some \(a\in\R^\times\). On the family \(\{F_\lambda:\lambda\in\R\}\), the action factors through
\[
\R^\times/\{\pm1\}\cong\Rp.
\]
For every \(\lambda,\mu>0\), there is a unique class
\([a]\in\R^\times/\{\pm1\}\) such that
\[
F_\mu\circ\sigma_a=F_\lambda.
\]
\end{proposition}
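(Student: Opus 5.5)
The plan has three parts: classify the continuous automorphisms, check how they act on the family, and then prove existence and uniqueness of the matching class.

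\emph{Classification.} Let \(\phi:\Rp\to\Rp\) be a continuous multiplicative automorphism. Pass to logarithmic coordinates by setting \(\psi(t)=\log\phi(e^t)\). Then \(\psi:\R\to\R\) is additive and continuous, so by Lemma~\ref{lem:bounded-additive} (with \(D=\R\)) we have \(\psi(t)=at\) for a unique \(a\in\R\). Bijectivity of \(\phi\) forces \(a\neq0\), so \(\phi(x)=e^{a\log x}=x^a=\sigma_a(x)\). Conversely, each \(\sigma_a\) with \(a\neq0\) is a continuous automorphism, with inverse \(\sigma_{1/a}\).

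\emph{Action on the family.} Since \(F_\lambda(\sigma_a(x))=\cosh(\lambda a\log x)-1=F_{a\lambda}(x)\), we get \eqref{eq:gauge-action}. Evenness of \(\cosh\) gives \(F_{a\lambda}=F_{|a|\lambda}\), and in particular \(\sigma_a\) and \(\sigma_{-a}\) act identically on every \(F_\lambda\). Moreover \(\sigma_a\circ\sigma_b=\sigma_{ab}\), so the action of the group \(\R^\times\) (acting on the right via composition) factors through the quotient \(\R^\times/\{\pm1\}\). This quotient is identified with \(\Rp\) via \([a]\mapsto|a|\).

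\emph{Existence and uniqueness.} Given \(\lambda,\mu>0\), take \(a=\lambda/\mu\). Then \(F_\mu\circ\sigma_a=F_{\lambda}\), which gives existence. For uniqueness, suppose \(F_\mu\circ\sigma_a=F_\lambda\). Then \(F_{|a|\mu}=F_\lambda\), with both \(|a|\mu\) and \(\lambda\) positive. Evaluate at \(x=e\), so \(t_0=1\): this gives \(\cosh(|a|\mu)=\cosh(\lambda)\), and Lemma~\ref{lem:single-value} yields \(|a|\mu=\lambda\). Alternatively, Lemma~\ref{lem:sign-rigidity} applied to the additive maps \(t\mapsto|a|\mu t\) and \(t\mapsto\lambda t\) gives \(|a|\mu=\pm\lambda\), and positivity picks the plus sign. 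Hence \(|a|=\lambda/\mu\), so the class \([a]\) is unique.

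The only step with any content is the classification, which needs the automatic-linearity of continuous additive maps. That is already supplied by Lemma~\ref{lem:bounded-additive}, so I expect no real obstacle. The one point to state carefully is that ``the action factors'' means the kernel of the \(\R^\times\)-action on the family contains \(\{\pm1\}\). If the intended meaning is that \(\{\pm1\}\) is exactly the kernel, then the action must be restricted to a nonflat \(F_\lambda\), since \(F_0\) is fixed by every \(\sigma_a\). In that case \(|a|\lambda=\lambda\) with \(\lambda>0\) forces \(|a|=1\).
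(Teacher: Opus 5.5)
Your proof is correct and follows essentially the same route as the paper: logarithmic coordinates reduce the classification to continuous additive maps on \(\R\), the identity \(F_\lambda\circ\sigma_a=F_{|a|\lambda}\) gives the factorization through \(\R^\times/\{\pm1\}\), and Lemma~\ref{lem:single-value} (which you apply explicitly at \(x=e\)) forces \(|a|=\lambda/\mu\). Your additional points are correct refinements that the paper leaves implicit: explicit existence via \(a=\lambda/\mu\), the converse that each \(\sigma_a\) is an automorphism, and the caveat that the kernel is exactly \(\{\pm1\}\) only on nonflat \(F_\lambda\).
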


\begin{proof}
Let us pass to logarithmic coordinates. Every continuous automorphism of
\((\R,+)\) has the form
\[
t\longmapsto at,
\qquad
a\neq0.
\]
Therefore every continuous multiplicative automorphism of \(\Rp\) has the form
\(
\sigma_a(x)=x^a.
\)

By \eqref{eq:gauge-action}, \(a\) and \(-a\) induce the same action on the family
\(\{F_\lambda:\lambda\in\R\}\), so the action factors through \(\R^\times/\{\pm1\}\).
Let \(\lambda,\mu>0\). By \eqref{eq:gauge-action}, we have
\[
F_\mu\circ\sigma_a=F_{|a|\mu}.
\]
By Lemma~\ref{lem:single-value}, we obtain
\[
F_\mu\circ\sigma_a=F_\lambda
\quad\Longleftrightarrow\quad
|a|\mu=\lambda.
\]
Therefore
\[
|a|=\frac{\lambda}{\mu}.
\]
Thus \(a=\pm\lambda/\mu\), and these two values define the same class in
\(\R^\times/\{\pm1\}\). Hence the class \([a]\) is unique.
\end{proof}
}

{\begin{theorem}
For \(F_\lambda\), the second derivative in logarithmic coordinates at the
identity is
\[
\left.\frac{d^2}{dt^2}F_\lambda(e^t)\right|_{t=0}
=
\lambda^2.
\]
Thus, within the family \(\{F_\lambda:\lambda\in\R\}\), the unit calibration
\[
\left.\frac{d^2}{dt^2}F_\lambda(e^t)\right|_{t=0}=1
\]
gives \(|\lambda|=1\), and hence \(F_\lambda=\J\).
\end{theorem}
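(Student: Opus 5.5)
The plan is a direct computation in logarithmic coordinates, followed by the two facts already established about the family. Set \(g(t)=F_\lambda(e^t)\). By \eqref{eq:Flambda}, \(g(t)=\cosh(\lambda t)-1\), which is smooth on \(\R\). Differentiating twice gives
\[
g'(t)=\lambda\sinh(\lambda t),\qquad g''(t)=\lambda^2\cosh(\lambda t),
\]
and at \(t=0\) we get \(g''(0)=\lambda^2\). As a consistency check, the Taylor expansion \(\cosh u-1=u^2/2+O(u^4)\) gives \(g(t)=\lambda^2t^2/2+O(t^4)\). This agrees with the limit \(\lambda^2/2\) computed in the proof of Proposition~\ref{prop:continuous-collapse}.

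For the calibration statement, the condition \(g''(0)=1\) becomes \(\lambda^2=1\), so \(\lambda=1\) or \(\lambda=-1\). Since \(\cosh\) is even, \(F_{-1}=F_1\); this is the identification \(F_\lambda=F_{-\lambda}\) noted after Theorem~\ref{thm:extension}. Finally,
\[
F_1(x)=\cosh(\log x)-1=\frac{x+x^{-1}}{2}-1=\J(x),
\]
so the calibrated member of the family is \(\J\).

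There is no real obstacle here. The only point needing care is that the calibration determines \(\lambda\) only up to sign. This is harmless because the solution itself, not the parameter, is what is being identified, and the sign ambiguity is exactly the \(\{\pm1\}\) quotient from Proposition~\ref{prop:general-moduli}.
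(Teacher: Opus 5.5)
Your proof is correct and follows the paper's own argument: compute \(F_\lambda(e^t)=\cosh(\lambda t)-1\), differentiate twice to get \(\lambda^2\) at \(t=0\), then use \(\lambda^2=1\) and \(F_1=F_{-1}=\J\). The Taylor-expansion consistency check you add is correct but not needed.
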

\begin{proof}
We have
\[
F_\lambda(e^t)=\cosh(\lambda t)-1.
\]
Taking the second derivative at \(t=0\) gives
\[
\left.\frac{d^2}{dt^2}F_\lambda(e^t)\right|_{t=0}
=
\lambda^2.
\]
Thus unit calibration gives \(\lambda^2=1\). Since
\[
F_1=F_{-1}=\J,
\]
the proof is completed.
\end{proof}
}

\begin{theorem}\label{thm:calibration-limit}
Let \(G\leq\Rp\) be such that \(\log G\) is dense in \(\R\), and let
\(F:G\to[0,\infty)\) satisfy \eqref{eq:rcl}. If
\[
\lim_{s\to0,\ s\in\log G}
\frac{F(e^s)}{s^2}
=
\frac12,
\]
then
\(
F=\J
\)
on \(G\).
\end{theorem}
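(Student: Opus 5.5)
By Proposition~\ref{prop:general-moduli}, applied to $G$ and written in logarithmic coordinates on $D=\log G$, there is an additive map $a:D\to\R$ such that
\[
F(e^t)=\cosh(a(t))-1 \qquad (t\in D).
\]
The plan is to show first that $a$ is linear, and then to read off the slope from the limit.

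\emph{Step 1: $a$ is linear.} The hypothesis says that $F(e^s)/s^2\to 1/2$ as $s\to0$ in $D$. Hence there is $\delta>0$ with $F(e^s)\le s^2\le 1$ for $s\in D$, $|s|<\delta$. So $f(t)=\cosh(a(t))-1$ is bounded on a neighbourhood of $0$ in $D$. Lemma~\ref{lem:cost-controls} then shows that $a$ is bounded on a neighbourhood of $0$. Since $D$ is dense by assumption, Lemma~\ref{lem:bounded-additive} gives a unique $\lambda\in\R$ with $a(t)=\lambda t$ for all $t\in D$. This is the step where density is essential: without it the local boundedness argument breaks down. Here it is simply assumed, so the step is routine.

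\emph{Step 2: identify $|\lambda|$.} Now $F(e^s)=\cosh(\lambda s)-1$ for $s\in D$. Because $D$ is dense, $0$ is an accumulation point of $D\setminus\{0\}$, so the limit along $D$ is meaningful and can be computed by the Taylor expansion of $\cosh$:
\[
\frac{\cosh(\lambda s)-1}{s^2}\to\frac{\lambda^2}{2} \qquad (s\to0).
\]
The expansion is valid for real $s\to0$, and therefore also along $D$. Comparing with the hypothesis gives $\lambda^2=1$. This also excludes the flat case $\lambda=0$, which would give limit $0$.

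\emph{Step 3: conclude.} We have $|\lambda|=1$, and $F_1=F_{-1}$. Hence $F(x)=\cosh(\log x)-1=\J(x)$ for $x\in G$. The only point needing care is Step 2, namely that the limit over $D$ determines $\lambda^2$; this holds because $D\setminus\{0\}$ accumulates at $0$. I expect no real obstacle beyond this.
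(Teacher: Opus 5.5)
Your proof is correct and follows the paper's argument essentially step for step. You use the representation $F(e^s)=\cosh(a(s))-1$ from Proposition~\ref{prop:general-moduli}, get local boundedness from the limit, apply Lemmas~\ref{lem:cost-controls} and~\ref{lem:bounded-additive} to obtain $a(s)=\lambda s$, and then read off $\lambda^2/2=1/2$; the only cosmetic fix is to choose $\delta\le 1$ so that $s^2\le 1$ holds on $|s|<\delta$.
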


\begin{proof}
Let us denote
\(
D=\log G.
\)
By Proposition~\ref{prop:general-moduli}, there is an additive map
\(a:D\to\R\) such that
\[
F(e^s)=\cosh(a(s))-1.
\]
The assumption gives
\[
F(e^s)\longrightarrow 0
\qquad (s\to0,\ s\in D),
\]
so the map \(s\mapsto F(e^s)\) is bounded in a neighborhood of \(0\) in \(D\).
 By
Lemmas~\ref{lem:cost-controls} and \ref{lem:bounded-additive}, we have
\[
a(s)=\lambda s
\]
for some \(\lambda\in\R\). Therefore
\[
\lim_{s\to0,\ s\in D}
\frac{F(e^s)}{s^2}
=
\frac{\lambda^2}{2}.
\]
Hence \(\lambda^2=1\), and therefore
\[
F=F_1=\J.
\]
\end{proof}

In \cite{CostUnique}, the conclusion \(F=\J\) is obtained from continuity on
\(\Rp\) together with unit second derivative in logarithmic coordinates. Theorem~\ref{thm:calibration-limit}
shows that, when \(\log G\) is dense in \(\R\), the single condition
\[
F(e^s)\sim \frac{s^2}{2}
\qquad (s\to0,\ s\in\log G)
\]
 implies \(F=\J\) on \(G\).

\section{Sharpness}\label{sec:sharpness}
In this section, we give examples showing why the main assumptions are needed.

\begin{example}\label{ex:flat}
The function \(F\equiv0\) is nonnegative, normalized, continuous, monotone,
and satisfies \eqref{eq:rcl}. Its second derivative in logarithmic coordinates
at the identity is zero. Thus a positive second derivative is needed to obtain
a positive parameter.
\end{example}

\begin{example}\label{ex:cosine}
For \(\tau\in\R\), let us define
\[
F'_{\tau}(x)=\cos(\tau\log x)-1.
\]
Then \(F'_{\tau}\) is a continuous normalized solution of
\eqref{eq:rcl}. It takes negative values unless \(\tau=0\). Thus only continuity
assumption does not select the nonnegative hyperbolic family.
\end{example}

\begin{example}\label{ex:hamel}
Let \(A:\R\to\R\) be a non-linear additive map obtained from a Hamel basis.
Then
\[
F_A(x)=\cosh(A(\log x))-1
\]
is a nonnegative normalized solution of \eqref{eq:rcl}, but it is neither
continuous nor measurable. Indeed,
\[
|A(t)|=\arcosh\bigl(F_A(e^t)+1\bigr).
\]
If \(F_A\) is measurable, then \(|A|\) is measurable. In this case,
Lemma~\ref{lem:measurable-additive} gives
\(A(t)=\lambda t\), which is a contradiction.
 Thus algebraic extension does not imply regular
extension.
\end{example}

\begin{example}\label{ex:lambda-two}
Let us consider \(F_2=F_{\lambda=2}\), that is,
\[
F_2(x)=\frac{x^2+x^{-2}}2-1.
\]
This function is nonnegative, normalized, continuous, monotone in the
nonnegative logarithmic coordinate, and satisfies \eqref{eq:rcl}. However,
\[
F_2\neq\J,
\]
and its second derivative in logarithmic coordinates at the identity is \(4\).
Thus only regularity assumption does not determine \(\J\).
\end{example}
 
 \begin{example}
The cost \(F^{(p_0)}\) defined by \eqref{eq:axis-twist} agrees with \(\J\)
on every prime axis, but differs from \(\J\) on mixed products by
Proposition~\ref{prop:mixed-detection}. Thus agreement on the separate prime
axes does not determine the solution on \(\Qp\).
\end{example}
 
\section{Conclusion} 
 
 {We determined the nonnegative solutions of the reciprocal cost law on
\(\Qp\). Unique factorization gives one real parameter for each prime, and
the global sign is the only identification between parameter families. Thus
the full rational moduli space is
\[
\left(\prod_{p\in\mathcal P}\R\right)\big/\{\pm1\}.
\]

Every nonnegative rational solution has an algebraic extension to \(\Rp\),
but regular extension is more restrictive. It exists exactly when there is
\(\lambda\in\R\) such that
\[
\alpha_p=\lambda\log p
\qquad (p\in\mathcal P).
\]
In this case the regular nonnegative extension is unique and equals
\(F_\lambda\). Otherwise the rational solution is unbounded on every
nonempty open subset of \(\Qp\). The regular locus is closed and nowhere
dense in the rational moduli space.

We also proved the corresponding result for nontrivial subgroups of \(\Rp\).
%Rational rank determines the alternative.
In rank one every nonnegative
solution has a regular extension, while in rank at least two the regular
solutions form a one-dimensional closed nowhere dense locus.

Within the regular family, the unit second derivative in logarithmic
coordinates,
%\[
%\left.\frac{d^2}{dt^2}F(e^t)\right|_{t=0}=1,
%\]
gives \(|\lambda|=1\) and determines
\[
\J(x)=\frac{x+x^{-1}}2-1.
\]
When the logarithm of the carrier is dense in \(\R\), the single asymptotic condition \(F(e^s)\sim s^2/2\) as \(s\to0\) implies \(F=\J\) on the carrier.

Thus regularity selects the logarithmic line in the rational moduli space,
while calibration selects \(\J\):
\[
\left(\prod_{p\in\mathcal P}\R\right)\big/\{\pm1\}
\supset
\left\{
\big[(\lambda\log p)_{p\in\mathcal P}\big]:
\lambda\in\R
\right\}
\cong
\{F_\lambda:\lambda\geq0\}
\supset
\{\J\}.
\]
}

\bigskip

\vspace{6pt}
%%%%%%%%%%%%%%%%%%%%%%%%%%%%%%%%%%%%%%%%%%
\noindent {\bf Author Contributions:} {Conceptualization, J.W.; methodology, J.W., S.PG., and M.Z.; software, J.W.; validation, J.W., S.PG., and M.Z.; formal analysis, J.W., S.PG. and M.Z.; investigation, J.W., S.PG. and M.Z.; writing, original draft preparation, J.W.; writing, review and editing, S.PG. and M.Z.; funding acquisition, J.W. All authors have read and agreed to the published version of the~manuscript.}

\end{document}